\theoremstyle{plain}
\newtheorem{theorem}{Theorem}[section]
\newtheorem{lemma}{Lemma}[section]
\newtheorem{proposition}{Proposition}[section]
\newtheorem{corollary}{Corollary}[section]
\theoremstyle{definition}
\newtheorem{definition}{Definition}[section]
\newtheorem{example}{Example}[section]
\theoremstyle{remark}
\newtheorem{remark}{Remark}[section]
\title{Symmetries and regularity for holomorphic maps between balls}
\author{John P. D'Angelo}
\address{Dept. of Mathematics, Univ. of Illinois, 1409 W. Green St., Urbana IL 61801}
\email{jpda@math.uiuc.edu}
\author{Ming Xiao}
\address{Dept. of Mathematics, Univ. of Illinois, 1409 W. Green St., Urbana IL 61801}
\curraddr{ Dept. of Mathematics, Univ. of California San Diego, 9500 Gilman Dr.,  La
Jolla CA 92093-0112}
\email{m3xiao@ucsd.edu}
\begin{document}

\maketitle

\begin{abstract} Let $f:{\mathbb B}^n \to {\mathbb B}^N$ be a holomorphic map.
We study subgroups $\Gamma_f \subseteq {\rm Aut}({\mathbb B}^n)$ and $T_f \subseteq {\rm Aut}({\mathbb B}^N)$.
When $f$ is proper, we show both these groups are Lie subgroups.
When $\Gamma_f$ contains the center
of ${\bf U}(n)$, we show that $f$ is spherically equivalent to a polynomial. When $f$ is minimal we show
that there is a homomorphism $\Phi:\Gamma_f \to T_f$ such that $f$ is equivariant with respect to $\Phi$.
To do so, we characterize minimality via the triviality 
of a third group $H_f$. We relate properties
of ${\rm Ker}(\Phi)$ to older results on invariant proper maps between balls. When $f$ is proper but completely non-rational,
we show that either both $\Gamma_f$ and $T_f$ are finite or both are noncompact.

\medskip

\noindent
{\bf AMS Classification Numbers}: 32H35, 32H02, 32M05, 32A50, 22E99, 51F25.

\medskip

\noindent
{\bf Key Words}: proper holomorphic mappings; automorphism groups; unitary transformations; unit ball,
group-invariant CR maps; Lie groups.
\end{abstract}

\section{Introduction}

Let ${\mathbb B}^n$ denote the unit ball in complex Euclidean space ${\mathbb C}^n$.
We consider bounded holomorphic maps $f:{\mathbb B}^n \to {\mathbb C}^N$; after 
division by a constant we assume that the image of the ball under $f$ lies 
in the unit ball ${\mathbb B}^N$. Following
[DX] we study various groups associated with such an $f$. Our set-up and many
of the results require neither 
that the map be proper nor that it be rational. Under these assumptions, however, we 
obtain additional information.

The holomorphic automorphism group ${\rm Aut}({\mathbb B}^n)$ is transitive.
It is a Lie group that can be regarded as the quotient of ${\bf SU}(n,1)$ by its center
or as a collection of linear fractional transformations.
Holomorphic maps $f,g: {\mathbb B}^n \to {\mathbb B}^N$ are {\bf spherically equivalent}
if there are automorphisms $\gamma$ and $\psi$ such that $\psi \circ g = f \circ \gamma$.

In [DX] the authors associated 
a subgroup $A_f$ of ${\rm Aut}({\mathbb B}^n) \times {\rm Aut}({\mathbb B}^N)$ to a proper map $f$.
This group $A_f$ is 
defined to be those pairs $(\gamma, \psi)$ for which $f \circ \gamma = \psi \circ f$.
Note that this definition makes sense whenever the image of $f$ is contained in ${\mathbb B}^N$.
One of our key regularity results holds in this more general setting.

Most of the results in [DX] involve properties of $\Gamma_f$, the projection
of $A_f$ onto its first factor. In this paper we develop further uses of this group,
including results that make no regularity assumptions about $f$. In Proposition 2.3 
we show that $\Gamma_f$ is a Lie subgroup of ${\rm Aut}({\mathbb B}^n)$, without assuming $f$ is rational.
We also use properties of $T_f$, the projection
onto the second factor. Here properness is used to show that $T_f$ is a (closed) Lie subgroup.

A map $f:{\mathbb B}^n \to {\mathbb C}^N$ is called {\bf minimal} if its image lies in no lower dimensional affine linear subspace.
See the survey article [HJY]. 
Let $H_f$ denote the subgroup of $T_f$ consisting of those target automorphisms $\psi$ for which $\psi \circ f = f$. 
In Proposition 3.1 we show that $H_f$ is trivial if and only if $f$ is minimal. 
When $f:{\mathbb B}^n \to {\mathbb B}^N$ is minimal, we prove (Theorem 3.1) 
that there is a group homomorphism $\Phi: \Gamma_f \to T_f$, and hence $f$ is equivariant with respect to $\Phi$. 
When $f:{\mathbb B}^n \to {\mathbb B}^N$ is also proper, the kernel of $\Phi$
depends on the boundary regularity of $f$. The kernel of $\Phi$ coincides with the group $G_f$, as defined in [DX].
Thus $G_f$ is the subgroup of $\Gamma_f$ consisting of those source automorphisms $\gamma$ for which $f \circ \gamma = f$.

We next summarize the results in the paper. We start with the following result.

\begin{corollary} Suppose $f:{\mathbb B}^n \to {\mathbb B}^N$ is holomorphic.
If $\Gamma_f$ contains the center of ${\bf U}(n)$, then
$f$ is spherically equivalent to a polynomial. \end{corollary}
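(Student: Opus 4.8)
The plan is to reduce to the normalized case $f(0)=0$ by postcomposing with a target automorphism, and then to exploit the fact that automorphisms of ${\mathbb B}^N$ fixing the origin are unitary, hence linear. First I would set $p=f(0)$, choose $\phi \in {\rm Aut}({\mathbb B}^N)$ with $\phi(p)=0$, and put $g=\phi \circ f$, so that $g(0)=0$. Postcomposition by a fixed automorphism only conjugates the second coordinate of each pair in $A_f$: if $f\circ\gamma=\psi\circ f$ then $g\circ\gamma = (\phi\psi\phi^{-1})\circ g$, so the first-factor projections agree and $\Gamma_g=\Gamma_f$. Hence $g$ still contains the center of ${\bf U}(n)$ in its group, and since $g$ is spherically equivalent to $f$, it suffices to show that $g$ is a polynomial.

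Next I would use the hypothesis. For each real $\theta$ the scalar rotation $\gamma_\theta(z)=e^{i\theta}z$ lies in $\Gamma_g$, so there is $\psi_\theta \in {\rm Aut}({\mathbb B}^N)$ with $g(e^{i\theta}z)=\psi_\theta\bigl(g(z)\bigr)$. Evaluating at $z=0$ gives $\psi_\theta(0)=g(0)=0$, and an origin-fixing automorphism of the ball is unitary; thus each $\psi_\theta$ is a linear map in ${\bf U}(N)$. Expanding $g=\sum_{k\ge 0} g_k$ into homogeneous parts and using linearity of $\psi_\theta$, the identity becomes $\sum_k e^{ik\theta}g_k(z)=\sum_k \psi_\theta\bigl(g_k(z)\bigr)$. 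Since $\psi_\theta$ preserves homogeneity, comparing terms of each degree yields $\psi_\theta(g_k)=e^{ik\theta}g_k$ for every $k$.

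The finishing step is a dimension count. Every value $g_k(z)$ is an eigenvector of the unitary $\psi_\theta$ with eigenvalue $e^{ik\theta}$. Fixing a single $\theta$ that is an irrational multiple of $\pi$ makes the numbers $e^{ik\theta}$ pairwise distinct, so the spans $V_k={\rm span}\{g_k(z):z\in{\mathbb B}^n\}$ lie in distinct eigenspaces of $\psi_\theta$ and are therefore mutually orthogonal in ${\mathbb C}^N$. A space of dimension $N$ can contain at most $N$ nonzero mutually orthogonal subspaces, so $g_k\equiv 0$ for all but finitely many $k$. Thus $g$ is a polynomial (in fact of degree at most $N$), and $f=\phi^{-1}\circ g$ is spherically equivalent to it.

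I expect the genuine content to be concentrated in this last count: the finiteness of $N$ is precisely what forces only finitely many homogeneous terms to survive, once the commuting target automorphisms are pinned down to be linear. The steps most deserving of care are verifying that $\Gamma_g=\Gamma_f$ under postcomposition (so the hypothesis is preserved after normalization) and that the homogeneous expansions may be compared termwise, which is legitimate exactly because each $\psi_\theta$ is linear and degree-preserving. Everything else is routine bookkeeping.
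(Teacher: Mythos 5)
Your proof is correct and reaches the conclusion by a genuinely different route from the paper's. The paper derives the corollary from Theorem 4.1 via Proposition 4.1: after normalizing $f(0)=0$, Lemma 2.1 converts membership of the circle $e^{i\theta}{\bf I}_n$ in $\Gamma_f$ into the scalar identity $||f(e^{i\theta}z)||^2=||f(z)||^2$, discards the target automorphisms entirely, and compares Taylor coefficients of $||f||^2$ to get $\langle c_\alpha,c_\beta\rangle\,(|\alpha|-|\beta|)=0$; finiteness then follows because the coefficient vectors span a finite-dimensional space, so a coefficient of sufficiently high degree is orthogonal to a spanning set and must vanish. You instead retain the unitaries $\psi_\theta$, observe that each homogeneous component $g_k$ takes values in the $e^{ik\theta}$-eigenspace of $\psi_\theta$, and invoke orthogonality of distinct eigenspaces of a single unitary with $\theta/\pi$ irrational. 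These are two faces of the same orthogonality phenomenon, but yours is more economical for this special case: it needs only one irrational rotation in $\Gamma_f$ rather than the full center, and no differentiation along a curve in the group. The paper's machinery is built to handle arbitrary one-parameter subgroups of the torus with weight vector ${\bf m}$, which is what Theorem 1.1 requires when only $k<n$ eigenvalues of $L$ are positive and one can only restrict to a subspace. One small slip: your parenthetical claim that $g$ has degree at most $N$ is false --- the map $z\mapsto z^{d}$ into ${\mathbb B}^1$ has $N=1$ and arbitrary degree $d$; the correct count is that at most $N$ homogeneous components are nonzero, which is all the argument needs.
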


This corollary follows from the next theorem, where we give a general condition
implying that a holomorphic map $f$ is a polynomial.
Since $\Gamma_f$ is a Lie group, $\Gamma_f \cap {\bf U}(n)$ also is. The Lie algebra $\mathfrak{g}$
of $\Gamma_f \cap {\bf U}(n)$ consists of skew-Hermitian matrices $M$. For $M \in \mathfrak{g}$ put $L=-iM$.
Then $L$ has real eigenvalues. We prove Theorem 1.1 in Section 4 and obtain the consequences in Corollary 1.2.

\begin{theorem} Let $f: {\mathbb B}^n \to {\mathbb B}^N$ be holomorphic with $f(0)=0$.
If the Lie algebra $\mathfrak{g}$ of $\Gamma_f \cap {\bf U}(n)$
contains a matrix $M$ such that $L=-iM$ has 
$k$ positive eigenvalues, then there is a $k$-dimensional linear subspace $V$
such that the restriction of $f$ to $V$ is a polynomial.  \end{theorem}

\begin{corollary} Let $f: {\mathbb B}^n \to {\mathbb B}^N$ be a proper holomorphic mapping.
\begin{itemize}
\item  Assume
$\Gamma_f$ contains the maximal $n$-torus in ${\rm Aut}({\mathbb B}^n)$. Then $f$ is spherically equivalent to a monomial map.
\item Assume $\Gamma_f$ contains ${\bf U}(n)$. Then $f$ is spherically equivalent to an orthogonal sum of tensor products. 

\item Suppose $\Gamma_f$ contains the center of ${\bf U}(n)$ and is noncompact.
Then $f$ is a linear fractional transformation.

\end{itemize}
\end{corollary}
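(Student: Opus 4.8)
The plan is to handle all three statements by first reducing $f$ to a polynomial and then extracting structure from the equivariance supplied by the symmetry group. In each case the hypothesis forces $\Gamma_f$ to contain the center of ${\bf U}(n)$, so the preceding corollary already yields that $f$ is spherically equivalent to a polynomial. For the first two statements I would argue more precisely, since I want the specific normal forms. Because the conclusions are only up to spherical equivalence, I may first conjugate the source so that the given maximal torus becomes the diagonal torus $T^n$ (resp.\ so that the given maximal compact subgroup becomes the isotropy group ${\bf U}(n)$ of the origin); this replaces $f$ by $f\circ\gamma$ and conjugates $\Gamma_f$. I then compose with a target automorphism carrying $f(0)$ to $0$, which leaves $\Gamma_f$ unchanged and only conjugates $T_f$, so that $f(0)=0$. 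Now $M=iI$ lies in the Lie algebra $\mathfrak{g}$ of $\Gamma_f\cap{\bf U}(n)$, and $L=-iM=I$ has $n$ positive eigenvalues, so Theorem 1.1 with $k=n$ gives that $f$ itself is a polynomial. Replacing the target by the linear span of the image (a subspace through $f(0)=0$) I may also assume $f$ is minimal; by Proposition 3.1 each $\psi_\gamma$ is then unique, and $\gamma\mapsto\psi_\gamma=\Phi(\gamma)$ is the homomorphism of Theorem 3.1. Since $\psi_\gamma(0)=\psi_\gamma(f(0))=f(\gamma\cdot 0)=f(0)=0$ for $\gamma\in{\bf U}(n)$, the restriction of $\Phi$ to $T^n$ or to ${\bf U}(n)$ is a unitary representation on ${\mathbb C}^N$.

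For the torus case I would decompose ${\mathbb C}^N$ into the weight spaces of the representation $\theta\mapsto\Phi(\mathrm{diag}(e^{i\theta_1},\dots,e^{i\theta_n}))$, indexed by characters $\chi_\alpha(\theta)=e^{i\langle\alpha,\theta\rangle}$. The equivariance $f(\mathrm{diag}(e^{i\theta_j})z)=\Phi(\cdots)f(z)$ forces the monomial $z^\alpha$ to appear only in components lying in the $\chi_\alpha$ weight space, and since distinct multi-indices give distinct characters, choosing a unitary target basis adapted to the weight decomposition makes every component of $f$ a single monomial; hence $f$ is spherically equivalent to a monomial map. For the ${\bf U}(n)$ case I would instead split $f=\sum_d f_d$ into homogeneous parts, each satisfying $f_d(Uz)=\Phi(U)f_d(z)$. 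Thus $f_d$ factors through the canonical map $z\mapsto z^{\otimes d}$, which realizes the irreducible ${\bf U}(n)$-module $\mathrm{Sym}^d({\mathbb C}^n)$, composed with an intertwiner into ${\mathbb C}^N$; by Schur's lemma this intertwiner is, up to a scalar and a unitary, an isometric embedding onto one isotypic copy of $\mathrm{Sym}^d$. Orthogonality of the distinct isotypic components, together with the sphere condition $\sum_d|f_d(z)|^2=1$ on $\partial{\mathbb B}^n$, then exhibits $f$ as an orthogonal sum of tensor products.

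For the third statement the preceding corollary gives a polynomial $p=\psi\circ f\circ\gamma$, and $\Gamma_p=\gamma^{-1}\Gamma_f\gamma$ is noncompact. I would then show that a proper polynomial map with noncompact symmetry group must be affine. Since $\Gamma_p$ is a noncompact Lie subgroup of the rank-one group ${\rm Aut}({\mathbb B}^n)$, it is not conjugate into ${\bf U}(n)$ and therefore contains an element $g$ with $g^{-1}(0)=a\ne 0$, whose linear-fractional form has denominator $1-\langle z,a\rangle$. Writing $p\circ g=\psi\circ p$ and comparing reduced denominators, the left-hand side has, after reduction, a denominator that is a power of the single linear form $1-\langle z,a\rangle$, while the right-hand side divides $1-\langle p(z),b\rangle$ with $b=\psi^{-1}(0)$. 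The crux, and the step I expect to be the main obstacle, is to show these are incompatible once $\deg p\ge 2$: for a genuine degree-$D$ proper polynomial the top homogeneous part $p_D$ cannot render $1-\langle p(z),b\rangle$ a power of a linear form without degenerating, and feeding this back through the entire noncompact flow in $\Gamma_p$ forces $D\le 1$. Granting this, $p$ is affine, hence linear fractional, and $f=\psi^{-1}\circ p\circ\gamma^{-1}$ is a composition of linear fractional transformations and so is itself linear fractional. The representation-theoretic arguments of the first two cases are classical once polynomiality is established; it is this denominator-and-degree analysis that I anticipate being the delicate point.
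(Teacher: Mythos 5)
Your reduction to the polynomial case is exactly the paper's: normalize so that $f(0)=0$, observe that each hypothesis puts $i{\bf I}_n$ in the Lie algebra of $\Gamma_f\cap{\bf U}(n)$, and apply Theorem 1.1 with $k=n$. Where you diverge is in what happens next: the paper stops there and quotes [DX] (``These statements appear in [DX] when $f$ is assumed to be rational''), whereas you reprove the three classifications. For the first two bullets your arguments are sound. The torus case is in substance the paper's own Lemma 4.1 --- orthogonality of the coefficient vectors $c_\alpha$ via the weight decomposition, then a target unitary turning each component into a single monomial --- reached through the equivariance homomorphism $\Phi$ rather than directly from $||f\circ\gamma||^2=||f||^2$; your preliminary step of cutting the target down to the span of the image to force minimality is legitimate, since the restricted map is still proper and $\Gamma_f$ is unchanged (Proposition 3.6). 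The ${\bf U}(n)$ case via $f_d=\tilde f_d\circ(z\mapsto z^{\otimes d})$, Schur's lemma on the irreducible module $\mathrm{Sym}^d({\mathbb C}^n)$, and orthogonality of non-isomorphic isotypic components is a correct self-contained proof of the classification that the paper simply imports from [D] and [DX]. This buys independence from those references at the cost of length.

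The third bullet, however, has a genuine gap, which you partly acknowledge. Two problems. First, your use of noncompactness is too weak: ``not conjugate into ${\bf U}(n)$, therefore contains an element $g$ with $g^{-1}(0)=a\ne 0$'' extracts only a single non-unitary element, and a single such element may be elliptic (it may fix some other interior point), in which case the closed subgroup it generates is compact and no contradiction with $\deg p\ge 2$ can follow --- indeed compact groups $\Gamma_f$ not contained in ${\bf U}(n)$ do occur for nonlinear maps. The content of noncompactness is the existence of a sequence $g_\nu\in\Gamma_p$ with $g_\nu(0)\to\partial{\mathbb B}^n$, equivalently a parabolic or hyperbolic element whose iterates escape to the boundary, and any proof must exploit that escape. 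Second, and more seriously, the actual incompatibility --- that $p\circ g=\psi\circ p$ cannot persist along such an escaping family unless $\deg p\le 1$ --- is precisely the theorem to be proved here; you name it as ``the main obstacle'' and then grant it. That step is the entire content of the noncompact case in [DX], so as written the third conclusion is unproved. Either cite that result, as the paper does, or carry out the degree/denominator analysis along the full one-parameter (or cyclic) noncompact subgroup rather than for one element.
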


These statements appear in [DX] when $f$ is assumed to be rational. Recall from [F1], when $n\ge 2$,
that a proper holomorphic map between balls, with sufficient boundary regularity, must be rational.
Here we show that Hermitian invariance under a large
group forces rationality. In particular, if the group $\Gamma_f$ contains the maximal $n$-torus in ${\rm Aut}({\mathbb B}^n)$, 
then $f$ is rational.  Once we know that the map is rational, the results from [DX]
yield the additional information about spherical equivalence in this corollary.
At the end of this paper we mention a rigidity problem discussed in [S] and [CM]
and related to the third conclusion of the corollary.

The projections of $A_f$ onto each of the factors will be significant in this paper.
It is natural to ask when there is a
homomorphism from $\Gamma_f$ to $T_f$. We answer this question (Theorem 3.1) as follows.

\begin{theorem} Let $f: \mathbb{B}^n \to \mathbb{B}^N$ be a minimal proper map. Then there is
a homomorphism $\Phi: \Gamma_f \to T_f \subseteq \mathrm{Aut}(\mathbb{B}^N)$. 
Thus $f$ is equivariant with respect to $\Phi$.
\end{theorem}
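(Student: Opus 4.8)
The plan is to define $\Phi$ by sending each source automorphism $\gamma\in\Gamma_f$ to the target automorphism $\psi$ that pairs with it inside $A_f$, and then to check that this assignment is well-defined and multiplicative. By the very definition of $\Gamma_f$ as the projection of $A_f$ onto its first factor, every $\gamma\in\Gamma_f$ admits at least one $\psi\in T_f$ with $(\gamma,\psi)\in A_f$, that is, with $f\circ\gamma=\psi\circ f$. The entire content of the construction therefore lies in proving that such a $\psi$ is \emph{unique}; this is where minimality is indispensable.

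First I would establish well-definedness. Suppose $\psi_1$ and $\psi_2$ both satisfy $f\circ\gamma=\psi_i\circ f$. Then $\psi_1\circ f=\psi_2\circ f$, so $(\psi_2^{-1}\psi_1)\circ f=f$, whence $\psi_2^{-1}\psi_1\in H_f$. Since $f$ is minimal, Proposition 3.1 gives $H_f=\{\mathrm{id}\}$, forcing $\psi_1=\psi_2$. Thus there is a unique $\psi=:\Phi(\gamma)$ attached to each $\gamma$, giving a well-defined set map $\Phi:\Gamma_f\to T_f$ that satisfies $f\circ\gamma=\Phi(\gamma)\circ f$ by construction; this identity is exactly the equivariance asserted in the theorem.

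Next I would verify the homomorphism property by a direct computation using associativity of composition. Given $\gamma_1,\gamma_2\in\Gamma_f$ with $\psi_i=\Phi(\gamma_i)$,
\[
f\circ(\gamma_1\circ\gamma_2)=(\psi_1\circ f)\circ\gamma_2=\psi_1\circ(f\circ\gamma_2)=\psi_1\circ(\psi_2\circ f)=(\psi_1\circ\psi_2)\circ f.
\]
Hence $(\gamma_1\gamma_2,\ \psi_1\psi_2)\in A_f$, and the uniqueness just proved yields $\Phi(\gamma_1\gamma_2)=\psi_1\psi_2=\Phi(\gamma_1)\Phi(\gamma_2)$. The identity and inverses are handled identically, so $\Phi$ is a group homomorphism.

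The main obstacle is the well-definedness step, which is precisely where minimality earns its keep: without triviality of $H_f$ the target automorphism is only determined up to the stabilizer of $f$, and no canonical choice would be multiplicative. Properness enters structurally rather than algebraically: it guarantees, via Proposition 2.3 and the companion statement that $T_f$ is a closed Lie subgroup, that $\Phi$ is a map between Lie groups, after which one checks continuity (hence smoothness) to upgrade it to a morphism of Lie groups. The bare algebraic statement of the theorem, however, already follows from the three steps above.
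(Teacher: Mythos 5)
Your proposal is correct and follows essentially the same route as the paper: uniqueness of the paired target automorphism via triviality of $H_f$ (Proposition 3.1), then a direct associativity computation for the homomorphism property. Your closing observation that properness is not needed for the algebraic statement is also consistent with the paper, whose Section 3 version of the theorem assumes only that $f$ is a minimal holomorphic map.
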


Proposition 3.1 shows that $f$ is minimal if and only if $H_f$ is trivial.
Another name for minimal is {\it target essential}. The related notion
of {\it source essential} appears in [DX] and is expressed there in terms of 
$\Gamma_f$.

Section 3 also gathers some older known results (see for example [D], [F2], [Li1], [Li2]) about the groups $G_f$
and places them in the context of this paper. As noted above, for minimal proper maps, 
$G_f$ is the kernel of $\Phi$. The possible invariant groups $G_f$ for 
proper maps depend upon the regularity assumptions. As has been long known, the
invariant group for a rational proper map must be cyclic ([Li2]) and the list of possible representations
is short ([D]). On the other hand, every finite group arises when we drop
the assumption of rationality. 

Section 4 provides the proof of Theorem 4.1, the first theorem stated above. 

Section 5 considers {\bf completely non-rational} proper maps $f$; that is,
for each positive-dimensional affine linear subspace $V$ of ${\mathbb C}^n$, the restriction of $f$ to $V$
is not rational. We prove the following result.

\begin{theorem} Let $f$ be a minimal completely non-rational proper mapping between balls.
Then $\Gamma_f$ and $T_f$ are either both finite or both noncompact. \end{theorem}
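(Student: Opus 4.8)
The plan is to combine the homomorphism of Theorem 3.1 with the polynomial rigidity of Theorem 1.1, using complete non-rationality to annihilate every positive-dimensional compact subgroup of $\Gamma_f$. First I would record the algebraic skeleton. Since $f$ is minimal, $H_f$ is trivial by Proposition 3.1, so for each $\gamma \in \Gamma_f$ there is a \emph{unique} target automorphism $\psi$ with $f \circ \gamma = \psi \circ f$; this $\psi$ is $\Phi(\gamma)$. Hence $\Phi$ maps $\Gamma_f$ onto $T_f$ with kernel exactly $G_f = \{\gamma : f \circ \gamma = f\}$, giving $T_f \cong \Gamma_f / G_f$. The theorem therefore reduces to understanding $\Gamma_f$ and $G_f$ separately and then transporting the conclusion across $\Phi$.

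The heart of the argument is the claim that \emph{every compact subgroup of $\Gamma_f$ is finite}. Here I would first note that minimality and complete non-rationality are invariants of spherical equivalence: ball automorphisms are projective linear, so they carry the slice of $\mathbb{B}^n$ by a complex affine subspace to another such slice and restrict there to a linear fractional map, whence rationality of a restriction $f|_V$ is preserved under pre- and post-composition with automorphisms. Granted this, suppose $K \subseteq \Gamma_f$ is compact of positive dimension. Conjugating the source by a suitable $\gamma_0$ (all maximal compact subgroups of $\mathrm{Aut}(\mathbb{B}^n)$ are conjugate to the isotropy group $\mathbf{U}(n)$) and then post-composing in the target to move the base point, I reduce to $f(0)=0$ with $K \subseteq \mathbf{U}(n)$. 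Then $\mathrm{Lie}(K) \subseteq \mathfrak{g}$ is nonzero; choosing $0 \ne M \in \mathrm{Lie}(K)$, the Hermitian matrix $L=-iM$ has a nonzero real eigenvalue, and after replacing $M$ by $-M$ I may take it positive. Theorem 1.1 then yields a positive-dimensional linear subspace $V$ on which $f$ restricts to a polynomial, contradicting complete non-rationality. In particular $\Gamma_f$ itself is not compact-and-infinite, so $\Gamma_f$ is either finite or noncompact.

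Next I would show $G_f$ is finite, which recovers the classical finiteness of the invariant group. Being closed, $G_f$ is a Lie subgroup. It contains no noncompact one-parameter subgroup $\{\gamma_t\}$, since such a subgroup drives some point $p$ to $\partial\mathbb{B}^n$ along a sequence $t_k$ while $f(\gamma_{t_k}(p)) = f(p)$ remains interior, contradicting properness. By the previous paragraph $G_f$ also contains no positive-dimensional compact subgroup, hence no compact one-parameter subgroup, so $G_f$ is discrete. A discrete subgroup acts properly discontinuously, so an infinite $G_f$ would have an orbit $G_f \cdot 0$ accumulating on $\partial\mathbb{B}^n$, again contradicting $f\circ\gamma=f$ and properness. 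Thus $G_f$ is finite.

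Finally I would transfer the dichotomy through $\Phi$. Because $T_f$ is the continuous image of $\Gamma_f$ and $G_f=\ker\Phi$ is finite, the exact sequence $1 \to G_f \to \Gamma_f \to T_f \to 1$ exhibits $\Gamma_f \to T_f$ as a finite covering; hence $\Gamma_f$ is compact if and only if $T_f$ is, and finite if and only if $T_f$ is. Combined with the fact that $\Gamma_f$ is finite or noncompact, this forces $\Gamma_f$ and $T_f$ to be simultaneously finite or simultaneously noncompact. The main obstacle, and the step I expect to require the most care, is the key claim of the second paragraph: verifying that complete non-rationality is genuinely preserved under spherical equivalence and that the compact subgroup can be normalized into $\mathbf{U}(n)$ while simultaneously arranging $f(0)=0$, since this is exactly what licenses the application of Theorem 1.1.
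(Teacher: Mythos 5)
Your core argument is exactly the paper's: after normalizing so that $f(0)=0$ and the compact subgroup sits inside ${\bf U}(n)$, a nonzero element $M$ of the Lie algebra gives $L=-iM$ with a positive eigenvalue (after replacing $M$ by $-M$ if necessary), and Theorem 1.1 produces a positive-dimensional linear subspace on which $f$ is a polynomial, contradicting complete non-rationality; hence $\Gamma_f\cap(\text{conjugate of }{\bf U}(n))$ is always finite and $\Gamma_f$ is either finite or noncompact. Your care about normalization (that spherical equivalence preserves complete non-rationality, and that one can simultaneously arrange $f(0)=0$ and $K\subseteq{\bf U}(n)$) is exactly the point the paper handles by passing to $f\circ\gamma$ and invoking Proposition 2.2, so that part is sound.

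Where you diverge is the transfer of the dichotomy to $T_f$. The paper simply invokes the third bullet of Proposition 2.3 ($\Gamma_f$ noncompact iff $T_f$ noncompact), which is proved directly from properness: $a_\nu\to\partial{\mathbb B}^n$ forces $b_\nu\to\partial{\mathbb B}^N$ and conversely. You instead route everything through $\Phi$ and the exact sequence $1\to G_f\to\Gamma_f\to T_f\to 1$. The purely algebraic half of your transfer ($\Gamma_f$ finite iff $T_f$ finite, since $\Phi$ is surjective with finite kernel) is fine and is in fact needed to complete the paper's own argument. But the assertion that this sequence ``exhibits $\Gamma_f\to T_f$ as a finite covering,'' which you use to get $T_f$ compact $\Rightarrow\Gamma_f$ compact, is not automatic: a surjective homomorphism with finite kernel need not be proper or even continuous without further input. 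Here you would need to argue that $A_f$ is closed, that the first projection $A_f\to\Gamma_f$ is a continuous bijection (by triviality of $H_f$) and hence, by the open mapping theorem for second-countable locally compact groups, a homeomorphism, making $\Phi$ a continuous open map with finite fibers. This is all true but is real work that your write-up elides; the simpler fix is to replace this step by citing Proposition 2.3 directly, which is what the paper does. Your re-derivation of the finiteness of $G_f$ duplicates Proposition 3.2 and could likewise be cited rather than reproved.
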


The first author acknowledges support from NSF Grant DMS 13-61001. Both authors thank the referee for suggesting
several additional references. 

\section{preliminaries}

The inner product of $z,w$ in complex Euclidean space is denoted by $\langle z,w \rangle$
and the Euclidean squared norm by $||z||^2$. We use the same notation in each dimension.
Let ${\mathbb B}^n$ denote the unit ball in ${\mathbb C}^n$ and let ${\bf U}(n)$ denote the unitary group.
As noted in the first paragraph of the introduction, $f$ is not assumed to be proper in the following crucial definitions.

\begin{definition} Suppose $f:{\mathbb B}^n \to {\mathbb B}^N$ is holomorphic.
Then $A_f$ is the subgroup of ${\rm Aut}({\mathbb B}^n) \times {\rm Aut}({\mathbb B}^n)$
consisting of those pairs $(\gamma, \psi)$ for which
 $$ f \circ \gamma = \psi \circ f. $$ 
\end{definition}

\begin{definition} In the setting of Definition 2.1, we consider the following groups:
\begin{itemize}
\item $\Gamma_f$ is the projection of $A_f$ on the first factor.
\item $T_f$ is the projection of $A_f$ on the second factor.
\item $G_f = \{ \gamma \in {\rm Aut}({\mathbb B}^n) : f \circ \gamma =f \}$.
\item $H_f = \{ \psi \in {\rm Aut}({\mathbb B}^N) : \psi \circ f =f \}$.
\end{itemize} \end{definition}

We first observe the following simple facts about spherical equivalence.

\begin{proposition}
Let $f, g: \mathbb{B}^n \to \mathbb{B}^N$ be spherically equivalent.
Put $g=\alpha \circ f \circ \beta$. Then $H_g=\alpha \circ H_f \circ \alpha^{-1}.$
\end{proposition}
\begin{proof} Suppose $\psi_f \circ f = f$. Then
$$ \alpha \circ \psi_f \circ \alpha^{-1} \circ g = \alpha \circ \psi_f \circ \alpha^{-1} \circ \alpha \circ f \circ \beta = \alpha \circ \psi_f \circ f \circ \beta = \alpha \circ f \circ \beta = g. $$ \end{proof}

\begin{proposition}
Let $f, g :\mathbb{B}^n \to \mathbb{B}^N$ be spherically equivalent.
Put $g=\alpha \circ f \circ \beta$.  Then 
$$A_g=\{(\beta^{-1} \circ \gamma \circ \beta, \alpha \circ \psi \circ \alpha^{-1}):
(\gamma, \psi) \in A_f \}.$$
\end{proposition}
\begin{proof} The proof is a formal calculation similar to the previous proof. \end{proof}

\begin{proposition} Let $f:\mathbb{B}^n \to \mathbb{B}^N$ be a proper holomorphic map. 
Then 
\begin{itemize}
\item   $\Gamma_f$ is a Lie subgroup of ${\rm Aut}({\mathbb B}^n)$.
\item $T_f$ is a Lie subgroup of ${\rm Aut}({\mathbb B}^N)$.
\item $\Gamma_f$ is noncompact if and only if $T_f$ is noncompact.
\end{itemize}
\end{proposition}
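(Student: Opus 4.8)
The plan is to reduce both Lie-subgroup assertions to closedness statements via Cartan's theorem that a closed subgroup of a Lie group is an embedded Lie subgroup, and then to use properness of $f$ to control the companion automorphisms attached to a given projection. First I would record two standard inputs. A sequence $\{\phi_j\}$ in $\mathrm{Aut}(\mathbb{B}^m)$ is a normal family, and by the classical description of limits of automorphisms of the ball, any locally uniform limit of such a sequence is either an element of $\mathrm{Aut}(\mathbb{B}^m)$ or a constant map whose value lies on $\partial\mathbb{B}^m$; hence, after passing to a subsequence, $\{\phi_j\}$ either converges in $\mathrm{Aut}(\mathbb{B}^m)$ or converges locally uniformly to a boundary point $p\in\partial\mathbb{B}^m$. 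Second, properness of $f$ is equivalent to $\|f(w)\|\to 1$ as $\|w\|\to 1$.

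To show $\Gamma_f$ is closed, take $\gamma_j\in\Gamma_f$ with $\gamma_j\to\gamma$ in $\mathrm{Aut}(\mathbb{B}^n)$ and choose $\psi_j$ with $(\gamma_j,\psi_j)\in A_f$, so that $f\circ\gamma_j=\psi_j\circ f$. I claim $\{\psi_j\}$ cannot leave every compact subset of $\mathrm{Aut}(\mathbb{B}^N)$. Otherwise, after passing to a subsequence, $\psi_j\to p\in\partial\mathbb{B}^N$ locally uniformly, so that for fixed $z$ we would have $\|f(\gamma_j(z))\|=\|\psi_j(f(z))\|\to 1$; but $\gamma_j(z)\to\gamma(z)\in\mathbb{B}^n$ forces $f(\gamma_j(z))\to f(\gamma(z))$, an interior point of norm $<1$, a contradiction. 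Hence a subsequence of $\{\psi_j\}$ converges to some $\psi\in\mathrm{Aut}(\mathbb{B}^N)$, and passing to the limit in $f\circ\gamma_j=\psi_j\circ f$ gives $(\gamma,\psi)\in A_f$, so $\gamma\in\Gamma_f$. The argument for $T_f$ is symmetric: given $\psi_j\in T_f$ with $\psi_j\to\psi$ and $(\gamma_j,\psi_j)\in A_f$, if $\{\gamma_j\}$ escaped to infinity with $\gamma_j\to q\in\partial\mathbb{B}^n$, then $\|f(\gamma_j(z))\|\to 1$ by properness, while $f(\gamma_j(z))=\psi_j(f(z))\to\psi(f(z))$ remains interior; so $\{\gamma_j\}$ subconverges in $\mathrm{Aut}(\mathbb{B}^n)$ and $\psi\in T_f$. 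Since $A_f$ is manifestly closed, being the common zero set of the continuous conditions $f(\gamma(z_k))=\psi(f(z_k))$ over a countable dense set $\{z_k\}\subset\mathbb{B}^n$, Cartan's theorem makes $A_f$, $\Gamma_f$, and $T_f$ Lie groups.

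For the third bullet I would first observe that the same mechanism shows the kernels of the two projections $\pi_1,\pi_2$ of $A_f$ are compact. If $\gamma_j\in G_f$ left every compact set, then $f(\gamma_j(z))=f(z)$ would have norm tending to $1$ by properness, which is absurd; likewise for $H_f$. Being closed and nonescaping, each of $G_f$ and $H_f$ is compact. Now $\pi_1:A_f\to\Gamma_f$ is a surjective continuous homomorphism with kernel $\{\mathrm{id}\}\times H_f\cong H_f$, and $\pi_2:A_f\to T_f$ is surjective with kernel $G_f\times\{\mathrm{id}\}\cong G_f$. For a Lie group possessing a compact normal subgroup, the group is compact if and only if the quotient is compact. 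Hence $\Gamma_f$ is compact iff $A_f$ is compact iff $T_f$ is compact, and the contrapositive is the asserted equivalence of noncompactness.

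The main obstacle is exactly the point that separates this from a triviality: projections of a closed subgroup of a product Lie group need not be closed, so knowing $A_f$ is closed is not enough. The essential input is properness, applied twice through the boundary relation $\|f(w)\|\to 1$; it rules out the degeneration in which the companion automorphisms drift to the boundary of the target (or of the source), which is precisely the phenomenon that could otherwise break closedness of the projections. The one step demanding genuine care is the normal-families dichotomy for $\mathrm{Aut}(\mathbb{B}^m)$ in the form stated, together with the verification that the surviving limit is a true automorphism rather than a boundary constant.
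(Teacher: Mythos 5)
Your argument is correct. For the first two bullets it is essentially the paper's argument in different clothing: the paper writes each automorphism as $U\phi_a$ with $U$ unitary and evaluates the intertwining relation at $0$ to get $f(U_\nu a_\nu)=V_\nu b_\nu$, then tracks whether the centers $a_\nu$, $b_\nu$ stay in a compact subset of the respective balls; you instead invoke the normal-families dichotomy (a sequence of ball automorphisms either subconverges in $\mathrm{Aut}(\mathbb{B}^m)$ or degenerates locally uniformly to a boundary constant) and test the relation at a single interior point. These are the same mechanism --- degeneration of $U_\nu\phi_{a_\nu}$ is exactly $a_\nu\to\partial\mathbb{B}^m$ --- and you correctly isolate that closedness of $\Gamma_f$ uses only continuity of $f$ while closedness of $T_f$ genuinely needs properness, just as the paper notes. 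For the third bullet you diverge: the paper argues directly that $a_\nu\to\partial\mathbb{B}^n$ forces $b_\nu\to\partial\mathbb{B}^N$ and conversely, via the same evaluation at $0$, whereas you route the equivalence through the compactness of the kernels $G_f\times\{\mathrm{id}\}$ and $\{\mathrm{id}\}\times H_f$ of the two projections and the general fact that a Lie group with compact normal subgroup is compact iff its quotient is. Your version is more structural and has the side benefit of recording that $G_f$ and $H_f$ are compact (facts the paper uses later in Section 3), but it silently relies on the open mapping theorem for $\sigma$-compact locally compact groups to identify $\Gamma_f$ and $T_f$ topologically with the quotients $A_f/\ker\pi_1$ and $A_f/\ker\pi_2$; that is standard but worth stating, and the paper's direct two-line argument avoids it entirely.
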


\begin{proof} Both $\Gamma_f$ and $T_f$ are subgroups; we must therefore show that they are closed.
That $\Gamma_f$ is closed relies only on $f$ being continuous; that $T_f$ is closed relies
on $f$ being continuous and proper.

We first note that $\gamma_\nu \to \gamma$ in ${\rm Aut}({\mathbb B}^k)$
if $\gamma_\nu(z) \to \gamma(z)$ for each $z \in {\mathbb B}^k$. Here $k$ can be $n$ or $N$.
Recall in each case that an automorphism has the form $L \phi_c$ where
$L \in {\bf U}(k)$ and $\phi_c$ is a linear fractional transformation with $\phi_c(0)=c$.

After composing with an automorphism we assume $f(0)=0$.
Consider a sequence $\gamma_\nu \in \Gamma_f$
converging to $\gamma$. By definition there is a sequence $\psi_\nu \in {\rm Aut}({\mathbb B}^N)$
such that 
$$ f \circ \gamma_\nu = \psi_\nu \circ f. $$
Write $\gamma_\nu = U_\nu \phi_{a_v}$ and $\psi_\nu = V_\nu \varphi_{b_v}$ as above.  Evaluating at $0$ gives
$$ f(U_\nu a_\nu) = V_\nu b_\nu. \eqno (1) $$
If $\gamma_v$ converges to $\gamma \in {\rm Aut}({\mathbb B}^n)$, then the sequence $\{a_\nu\}$
lies in a compact subset of ${\mathbb B}^n$. Since $U_\nu$ is unitary and $f$ is continuous,
the left-hand side of (1) lies in a compact subset of ${\mathbb B}^N$. Therefore, since $V_\nu$ is unitary,
the sequence $\{b_\nu\}$ lies in a compact subset of ${\mathbb B}^N$ as well. 

Next consider a sequence $\psi_v \in T_f$ converging in ${\rm Aut}({\mathbb B}^N)$. 
Using the above notation, now $\{b_\nu\}$, and hence $V_\nu b_\nu$, lies in
a compact subset of the target ball ${\mathbb B}^N$. Since $f$ is continuous and proper, the inverse image of a compact
set is compact. Hence (1) implies that $\{U_\nu a_\nu\}$, and hence $\{a_\nu\}$, lies in a compact subset of ${\mathbb B}^n$.

Thus, if $\gamma_\nu \in \Gamma_f$ and $\gamma_\nu$ converges to $\gamma$, or if $\psi_\nu \in T_f$
and $\psi_\nu$ converges to $\psi$, then the sequences $\{a_\nu\}$ or $\{b_\nu\}$ are bounded
and hence have convergent subsequences. The unitary groups are compact,
and hence the $U_\nu$ and the $V_\nu$ also have convergent subsequences.
Thus there is a subsequence of source automorphisms
converging to a source automorphism $\gamma$ and a corresponding target automorphism $\psi$ with $f \circ \gamma = \psi \circ f$.
Thus both $\Gamma_f$ and $T_f$ are closed subgroups of their respective automorphism groups.
Hence each is a Lie subgroup.

The third statement has a similar proof. If $\Gamma_f$ contains a sequence of automorphisms $U_\nu \phi_{a_\nu}$
where $\{a_\nu\}$ tends to the boundary sphere in the source, then the corresponding sequence $V_\nu \varphi_{b_\nu}$, by properness, must have $\{b_\nu\}$ tending to the sphere in the target, and conversely.

\end{proof}

An automorphism of the ball preserves the origin if and only if it is unitary. This fact 
leads to the following lemma.

\begin{lemma} Suppose $f:{\mathbb B}^n \to {\mathbb B}^N$ is holomorphic and $f(0)=0$.
Let $\gamma \in {\bf U}(n)$. Then $\gamma \in \Gamma_f$ if and only if
$||f \circ \gamma||^2 = ||f||^2$.
\end{lemma}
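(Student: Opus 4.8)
The plan is to prove the two implications separately, with the substantive content residing in the reverse direction; the forward direction is a short consequence of the fact recalled just above the lemma.

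For the forward direction, suppose $\gamma \in \Gamma_f$. By definition there is a $\psi \in \mathrm{Aut}(\mathbb{B}^N)$ with $f \circ \gamma = \psi \circ f$. Since $\gamma \in {\bf U}(n)$ we have $\gamma(0)=0$, and together with $f(0)=0$ this gives $\psi(0) = \psi(f(0)) = f(\gamma(0)) = f(0) = 0$. Because an automorphism of the ball that fixes the origin is unitary, we conclude $\psi \in {\bf U}(N)$. Unitary maps preserve the squared norm, so $||f \circ \gamma(z)||^2 = ||\psi(f(z))||^2 = ||f(z)||^2$ for every $z$, which is the asserted identity.

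For the reverse direction, assume $||f \circ \gamma||^2 = ||f||^2$ on $\mathbb{B}^n$. The key step is to polarize this real-analytic identity: each side is holomorphic in $z$ and antiholomorphic in $\overline{z}$, so comparing the bihomogeneous holomorphic/antiholomorphic components (equivalently, complexifying $\overline{z}$ to an independent variable $\overline{w}$) shows that equality on the diagonal forces $\langle f \circ \gamma(z), f \circ \gamma(w)\rangle = \langle f(z), f(w)\rangle$ for all $z,w \in \mathbb{B}^n$. This inner-product identity lets me define a linear map $T$ on $W = \mathrm{span}\{f(z) : z \in \mathbb{B}^n\}$ by $T(f(z)) = f \circ \gamma(z)$: any linear relation $\sum_i c_i f(z_i) = 0$ is transported to the same relation among the $f \circ \gamma(z_i)$, since $||\sum_i c_i f\circ\gamma(z_i)||^2 = ||\sum_i c_i f(z_i)||^2 = 0$, so $T$ is well defined, and the same identity shows $T$ is an isometry. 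Because $\gamma$ maps $\mathbb{B}^n$ bijectively onto itself, the set $\{f\circ\gamma(z)\}$ equals $\{f(z)\}$, so $T$ is a surjective isometry of $W$, hence unitary on $W$. Extending $T$ by the identity on $W^\perp$ yields a unitary $U \in {\bf U}(N) \subseteq \mathrm{Aut}(\mathbb{B}^N)$ with $U \circ f = f \circ \gamma$. Taking $\psi = U$ shows $(\gamma,\psi) \in A_f$, and therefore $\gamma \in \Gamma_f$.

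The main obstacle is the polarization step together with the well-definedness of $T$: one must verify that agreement of the two squared-norm functions on the diagonal really yields agreement of the full sesquilinear expressions off the diagonal, and that this in turn respects every linear dependence among the vectors $f(z)$. Once the Hermitian identity $\langle f \circ \gamma(z), f\circ\gamma(w)\rangle = \langle f(z), f(w)\rangle$ is secured, the passage to a unitary is routine linear algebra. I would note that the hypothesis $f(0)=0$ is used only to streamline the forward direction; the underlying mechanism is simply that two holomorphic maps have equal squared norms precisely when they differ by a fixed unitary matrix.
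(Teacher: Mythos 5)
Your proof is correct, and the two directions are handled with the same underlying ideas as the paper: the direction $\gamma \in \Gamma_f \Rightarrow \|f\circ\gamma\|^2 = \|f\|^2$ is argued identically (evaluate at $0$, conclude $\psi$ fixes the origin and is therefore unitary). The difference lies in the converse. The paper simply invokes a well-known result from [D]: if two holomorphic maps into $\mathbb{C}^N$ have identical squared norms, they differ by a fixed unitary. You instead prove that result from scratch --- polarizing the real-analytic identity $\|f(\gamma z)\|^2 = \|f(z)\|^2$ to obtain $\langle f(\gamma z), f(\gamma w)\rangle = \langle f(z), f(w)\rangle$, using this Hermitian identity to define a well-defined linear isometry $T$ on $\mathrm{span}\{f(z)\}$, observing that $T$ is surjective because $\gamma$ permutes the ball, and extending by the identity on the orthogonal complement. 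All of these steps are sound, so your write-up is a self-contained version of what the paper outsources to a citation; what it buys is independence from [D], at the cost of a page of routine (but correctly executed) linear algebra and one genuinely nontrivial verification, the polarization step, which you rightly identify as the crux.

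One small quibble with your closing remark: the hypothesis $f(0)=0$ does not merely \emph{streamline} the forward direction --- it is essential to it. Without it, $\gamma \in \Gamma_f$ only yields $f\circ\gamma = \psi\circ f$ for some automorphism $\psi$ that need not be unitary, and the norm identity can genuinely fail (e.g.\ $n=N=1$ with $f$ an automorphism not fixing $0$, where $\Gamma_f$ is all of $\mathrm{Aut}(\mathbb{B}^1)$). The reverse direction, as you note, does not use $f(0)=0$ at all.
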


\begin{proof} Suppose first that $||f \circ \gamma||^2 = ||f||^2$.
Then, by a well-known result from [D], there is a $U \in {\bf U}(N)$ such that
$f \circ \gamma = U \circ f$. Thus $\gamma \in \Gamma_f$.
Conversely, suppose $\gamma \in \Gamma_f$. Then  $f \circ \gamma = \psi_\gamma \circ f$ for some target
automorphism $\psi_\gamma$.
Evaluating at $0$ and using $f(0)=0$ gives $\psi_\gamma(0)=0$. Hence $\psi_\gamma$
is unitary and 
$$||f \circ \gamma||^2 = ||\psi_\gamma \circ f||^2 = ||f||^2 . $$
\end{proof}

\section{Equivariance}

The primary purpose of this section is to prove the following theorem.

\begin{theorem}
Let $f: \mathbb{B}^n \to \mathbb{B}^N$ be a minimal holomorphic map. Then there is
a homomorphism $\Phi: \Gamma_f \to T_f \subseteq \mathrm{Aut}(\mathbb{B}^N)$. 
Thus $f$ is equivariant with respect to $\Phi$.
\end{theorem}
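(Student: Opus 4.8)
The plan is to define $\Phi$ by sending each source automorphism to the (necessarily unique) target automorphism that intertwines with it through $f$, and then to verify that this assignment is well defined, is a homomorphism, and manifestly realizes the equivariance $f \circ \gamma = \Phi(\gamma) \circ f$.

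First I would construct $\Phi$. By definition of $\Gamma_f$ as the projection of $A_f$ onto its first factor, each $\gamma \in \Gamma_f$ admits at least one $\psi \in T_f$ with $(\gamma,\psi) \in A_f$, that is, with $f \circ \gamma = \psi \circ f$. I would set $\Phi(\gamma) = \psi$; the entire argument hinges on showing that this $\psi$ is unique.

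The main obstacle, and really the only substantive point, is precisely this uniqueness, and it is exactly where minimality enters. Suppose $\psi_1, \psi_2 \in T_f$ both satisfy $f \circ \gamma = \psi_i \circ f$. Then $\psi_1 \circ f = \psi_2 \circ f$, so $(\psi_2^{-1}\circ\psi_1)\circ f = f$, whence $\psi_2^{-1}\circ\psi_1 \in H_f$. By Proposition 3.1, minimality of $f$ forces $H_f$ to be trivial, so $\psi_1 = \psi_2$. Thus $\Phi:\Gamma_f \to T_f$ is a well-defined map. Note that no properness or rationality hypothesis is invoked here: only the triviality of $H_f$, equivalently minimality, is needed to make $\Phi$ single-valued.

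Finally I would verify the algebraic properties, which are now formal consequences of uniqueness. Given $\gamma_1,\gamma_2 \in \Gamma_f$ with $f \circ \gamma_i = \Phi(\gamma_i)\circ f$, I compute
\[ f \circ(\gamma_1\gamma_2) = (\Phi(\gamma_1)\circ f)\circ\gamma_2 = \Phi(\gamma_1)\circ(f\circ\gamma_2) = \Phi(\gamma_1)\circ\Phi(\gamma_2)\circ f. \]
By the uniqueness just established, $\Phi(\gamma_1\gamma_2) = \Phi(\gamma_1)\,\Phi(\gamma_2)$, so $\Phi$ is a homomorphism, with $\Phi(\mathrm{id}) = \mathrm{id}$ and $\Phi(\gamma^{-1}) = \Phi(\gamma)^{-1}$ following at once. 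The equivariance claim $f\circ\gamma = \Phi(\gamma)\circ f$ for all $\gamma\in\Gamma_f$ is simply the defining relation of $\Phi$, so nothing further is required. As a byproduct one reads off that $\gamma \in \operatorname{Ker}(\Phi)$ iff $f\circ\gamma = f$, i.e. $\operatorname{Ker}(\Phi) = G_f$, as asserted in the introduction.
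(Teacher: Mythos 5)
Your proof is correct and follows essentially the same route as the paper: uniqueness of the intertwining target automorphism via triviality of $H_f$ (Proposition 3.1), then the formal verification that $\Phi$ is a homomorphism. The only cosmetic difference is the order of composition in the homomorphism computation, which is immaterial.
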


We also have the following characterization of minimality.

\begin{proposition} Let $f: \mathbb{B}^n \to \mathbb{B}^N$ be holomorphic.  
Then $H_f$ is the trivial group if and only if $f$ is minimal.
\end{proposition}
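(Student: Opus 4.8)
The plan is to prove both implications by contraposition, in each case exploiting the structure of the fixed-point locus of an automorphism of ${\mathbb B}^N$. First I would record a reduction: because $f$ is holomorphic and ${\mathbb B}^n$ is connected, the image of $f$ lies in a proper \emph{real} affine subspace if and only if it lies in a proper \emph{complex} affine subspace. Indeed, if $\mathrm{Re}\,\langle f(z),c\rangle$ is constant, then the holomorphic function $\langle f(z),c\rangle$ has constant real part and is therefore constant. Thus $f$ fails to be minimal precisely when $f({\mathbb B}^n) \subseteq W \cap {\mathbb B}^N$ for some complex affine subspace $W$ with $\dim_{\mathbb C} W < N$. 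Throughout I would use the standard fact that an element of ${\rm Aut}({\mathbb B}^N)$, being the restriction of a projective linear transformation, carries each set of the form $W \cap {\mathbb B}^N$ (with $W$ a complex affine subspace) to another such set of the same dimension; equivalently, automorphisms permute the totally geodesic complex submanifolds of a given dimension.

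To handle the direction that non-minimality forces $H_f$ nontrivial, I would suppose $f({\mathbb B}^n) \subseteq W \cap {\mathbb B}^N$ with $\dim W = k < N$. Choosing a point $p$ in the (nonempty) image and an automorphism $\beta$ with $\beta(p)=0$, the set $\beta(W \cap {\mathbb B}^N)$ equals $L \cap {\mathbb B}^N$, where $L$ is a complex linear subspace (it contains $0$) of dimension $k < N$. Since $L^\perp \neq \{0\}$, I can pick $V \in {\bf U}(N)$ acting as the identity on $L$ and nontrivially on $L^\perp$; then $V$ fixes $L \cap {\mathbb B}^N$ pointwise and $V \neq I$. Setting $\psi = \beta^{-1} V \beta$ produces a nontrivial automorphism fixing $W \cap {\mathbb B}^N$ pointwise, so that $\psi \circ f = f$, giving a nontrivial element of $H_f$.

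For the converse I would suppose $H_f$ contains a nontrivial $\psi$, so that $\psi$ fixes every point of $f({\mathbb B}^n)$; in particular $\psi$ has an interior fixed point $q = f(z_0)$. Choosing $\beta$ with $\beta(q)=0$, the conjugate $\beta \psi \beta^{-1}$ fixes the origin and is therefore unitary (using the fact recorded just before Lemma 2.1), say $V \in {\bf U}(N)$ with $V \neq I$. Its fixed-point set in the ball is $\ker(V-I) \cap {\mathbb B}^N$, the intersection of the ball with a \emph{proper} linear subspace. Transporting back by $\beta^{-1}$, the fixed-point set of $\psi$ in ${\mathbb B}^N$ is $W \cap {\mathbb B}^N$ for a complex affine subspace $W$ with $\dim W < N$; since $f({\mathbb B}^n)$ lies in this set, $f$ is not minimal.

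The routine verifications here are the group-theoretic identities; the one step deserving care is the geometric fact that automorphisms send affine slices $W \cap {\mathbb B}^N$ to affine slices of the same dimension, and in particular that the fixed locus of an automorphism with an interior fixed point is exactly such a slice. This is what lets me move a fixed point to the origin, reduce to the unitary case where the fixed set is visibly a proper linear subspace, and then transport the conclusion back; it is used symmetrically in both directions.
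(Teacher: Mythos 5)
Your proof is correct and follows essentially the same route as the paper's (Lemmas 3.1--3.2): both directions normalize by an automorphism so that the relevant target automorphism fixes the origin and is hence unitary, and then the linear algebra is identical — your statement that the fixed locus of $V\in{\bf U}(N)$ is $\ker(V-{\bf I})\cap{\mathbb B}^N$ is just the contrapositive packaging of the paper's argument that a unitary fixing $f({\mathbb B}^n)$ fixes $\mathrm{Span}_{\mathbb C}(f)$. The extra observation about real versus complex affine subspaces and the explicit appeal to automorphisms permuting affine slices are fine but not needed in the paper's formulation.
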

\begin{proof} Lemma 3.1 below states that $f$ minimal implies $H_f$ trivial.
Lemma 3.2 shows that $f$ not minimal implies $H_f$ not trivial.\end{proof}

\begin{lemma}
Let $f$ be a minimal map from $\mathbb{B}^n$ to $\mathbb{B}^N$. Then $H_f$ is
the trivial group consisting of the identity map ${\bf I}_N$.
\end{lemma}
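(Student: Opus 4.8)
The plan is to show directly that any $\psi \in H_f$ must fix every point of $\mathbb{B}^N$. Fix such a $\psi$, so $\psi \circ f = f$, choose a base point $z_0 \in \mathbb{B}^n$, and set $p = f(z_0)$. Since $\psi \circ f = f$ we get $\psi(p)=p$, so $\psi$ has an interior fixed point. The idea is to move this fixed point to the origin so that $\psi$ becomes linear: pick $\tau \in \mathrm{Aut}(\mathbb{B}^N)$ with $\tau(p)=0$ and conjugate. Put $g = \tau \circ f$ and $\widetilde{\psi} = \tau \circ \psi \circ \tau^{-1}$. Then $\widetilde{\psi}(0)=\tau(\psi(p))=\tau(p)=0$, and by the fact recalled above that an automorphism of the ball fixing the origin is unitary, we may write $\widetilde{\psi} = V \in \mathbf{U}(N)$. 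A one-line computation gives $V \circ g = \tau \circ \psi \circ \tau^{-1} \circ \tau \circ f = \tau \circ \psi \circ f = \tau \circ f = g$, so $V$ fixes the image $g(\mathbb{B}^n)=\tau(f(\mathbb{B}^n))$ pointwise.

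Because $V$ is linear and fixes $g(\mathbb{B}^n)$ pointwise, it fixes the complex-linear span $W$ of $g(\mathbb{B}^n)$ pointwise. Since $g(z_0)=\tau(p)=0$, the set $g(\mathbb{B}^n)$ contains the origin, so its affine span equals its linear span $W$. Thus it suffices to prove that $g$ is minimal, that is $W=\mathbb{C}^N$: then $V=\mathbf{I}_N$, whence $\widetilde{\psi}=\mathbf{I}_N$ and finally $\psi=\mathbf{I}_N$, giving $H_f=\{\mathbf{I}_N\}$.

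The main point is therefore to show that minimality is preserved under post-composition with a target automorphism, so that $g=\tau\circ f$ inherits minimality from $f$. This is where the geometry of the ball enters, and it is the step I expect to require the most care. The relevant fact is that automorphisms of $\mathbb{B}^N$ permute the affine slices $\mathbb{B}^N \cap E$, with $E$ a complex affine subspace, preserving the dimension of $E$; these are exactly the complete totally geodesic complex submanifolds of the ball, hence are carried to one another by the isometries in $\mathrm{Aut}(\mathbb{B}^N)$. Granting this, if $g(\mathbb{B}^n)$ lay in a proper affine subspace $A$, then $f(\mathbb{B}^n)=\tau^{-1}(g(\mathbb{B}^n)) \subseteq \tau^{-1}(A\cap \mathbb{B}^N)=A'\cap \mathbb{B}^N$ with $\dim A' = \dim A < N$, contradicting the minimality of $f$. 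Hence $g$ is minimal and the argument closes.

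An alternative, more self-contained route avoids the classification of slices and uses only the one-dimensional case. If $\psi$ fixes two distinct points $a,b \in \mathbb{B}^N$, then it preserves the complex geodesic $D=\ell\cap\mathbb{B}^N$ through them, since automorphisms carry complex geodesics to complex geodesics; and $\psi|_D$ is then a disk automorphism with two distinct interior fixed points, hence the identity, so $D$, and in particular the segment $[a,b]$, lies in $\mathrm{Fix}(\psi)$. Thus $\mathrm{Fix}(\psi)$ is convex. Since a minimal image lies in no real affine hyperplane (a holomorphic function with constant real part is constant), $f(\mathbb{B}^n)$ spans $\mathbb{C}^N\cong\mathbb{R}^{2N}$ affinely over $\mathbb{R}$, so the convex set $\mathrm{Fix}(\psi)$ has nonempty interior; being also the zero set of the holomorphic map $w\mapsto \psi(w)-w$, it must equal all of $\mathbb{B}^N$, again giving $\psi=\mathbf{I}_N$. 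Either way, the crux is the invariance of the totally geodesic structure of the ball under its automorphisms.
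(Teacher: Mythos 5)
Your first argument is correct and is essentially the proof in the paper: the paper normalizes by a target automorphism so that $f(0)=0$, observes that any $\psi \in H_f$ then fixes $0$ and is therefore unitary, and concludes that a linear map fixing the spanning set $f(\mathbb{B}^n)$ pointwise is the identity. One small credit to you: the paper invokes Proposition 2.1 to justify the normalization and then silently uses that the normalized map is still minimal, whereas you explicitly verify that minimality is preserved under post-composition with an automorphism (via the fact that automorphisms permute the affine slices $E \cap \mathbb{B}^N$); that is a point worth making explicit, though it also follows immediately from the linear-fractional form of the automorphisms. Your alternative argument --- showing $\mathrm{Fix}(\psi)$ is convex via uniqueness of complex geodesics and the two-fixed-point rigidity of disk automorphisms, then noting that a minimal image cannot lie in a real hyperplane so the convex fixed set has interior and the identity theorem applies --- is a genuinely different and also correct route; it buys you independence from the normalization step entirely, at the cost of importing the totally geodesic structure of the ball, which the paper's purely linear-algebraic argument does not need.
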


\begin{proof}
By Proposition 2.1, the statement is true for $f$ if and only if it is true for
$\psi \circ f \circ \gamma$ with $\gamma \in \mathrm{Aut}(\mathbb{B}^n)$ and $\psi
\in \mathrm{Aut}(\mathbb{B}^N).$ Thus, without loss of generality, we can compose with a
target automorphism to make $f(0)=0$. Consider the span:
$$\mathrm{Span}_{\mathbb{C}}(f)=\{\sum_{i=1}^s \lambda_{i} f(z_i): z_i \in
\mathbb{B}^n \ {\rm and} \ \lambda_i \in \mathbb{C} \}. $$
Since $f$ is minimal, $\mathrm{Span}_{\mathbb{C}}\{ f\}= \mathbb{C}^N.$ Let $\psi \in H_f$. Then
$\psi \circ f=f$. We put $z=0$ to conclude $\psi(0)=0$. Thus $\psi \in {\bf U}(N)$.
Since  $\psi$ is linear and preserves every $f(z)$, it must preserve every element
in $\mathrm{Span}_{\mathbb{C}}(f)=\mathbb{C}^N.$ Thus $\psi$ is the identity
map. 
\end{proof}

\begin{lemma} Let $f$ be a minimal map from $\mathbb{B}^n$ to
$\mathbb{B}^m$. Let $g$ be a holomorphic map from $\mathbb{B}^n$ to $\mathbb{B}^N$ that is spherically
equivalent to $0 \oplus f$. Then $H_g$ is conjugate to ${\bf U}(k) \oplus {\bf I}_m$. 
\end{lemma}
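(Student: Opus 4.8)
The plan is to reduce the statement to a direct computation of $H_{0 \oplus f}$ and then invoke Proposition 2.1. Write $g = \alpha \circ (0 \oplus f) \circ \beta$ with $\alpha \in \mathrm{Aut}(\mathbb{B}^N)$ and $\beta \in \mathrm{Aut}(\mathbb{B}^n)$, where $0 \oplus f$ sends $z$ to $(0,f(z)) \in \mathbb{C}^k \oplus \mathbb{C}^m = \mathbb{C}^N$; this indeed maps into ${\mathbb B}^N$ since $\|(0,f(z))\|=\|f(z)\|<1$. By Proposition 2.1 we have $H_g = \alpha \circ H_{0 \oplus f} \circ \alpha^{-1}$, so it suffices to identify $H_{0 \oplus f}$ up to conjugacy. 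Because the conjugacy class of $H$ is a spherical-equivalence invariant, I would further normalize: replacing $f$ by $\psi_0 \circ f$ for a suitable $\psi_0 \in \mathrm{Aut}(\mathbb{B}^m)$ only changes $0 \oplus f$ by the target automorphism ${\bf I}_k \oplus \psi_0$, hence changes $H_{0\oplus f}$ within its conjugacy class. Thus I may assume $f(0)=0$, so that $(0 \oplus f)(0)=0$.

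Next I would exploit the fixed-point condition together with minimality. Let $\psi \in H_{0 \oplus f}$, so $\psi \circ (0 \oplus f) = 0 \oplus f$. Evaluating at $z=0$ gives $\psi(0)=0$, so $\psi$ is unitary, since an automorphism of the ball fixes the origin if and only if it is unitary. The identity $\psi \circ (0\oplus f)=0\oplus f$ then says that $\psi$ fixes each vector $(0,f(z))$. Because $f$ is minimal as a map into $\mathbb{B}^m$, we have $\mathrm{Span}_{\mathbb{C}}(f)=\mathbb{C}^m$, so these vectors span the subspace $V=\{0\}\oplus \mathbb{C}^m$. As $\psi$ is linear, it must fix $V$ pointwise. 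This is precisely the step where minimality is essential: without it, the image of $f$ would span a proper subspace of $\mathbb{C}^m$, and $\psi$ would be forced to fix only that subspace, producing a strictly larger group.

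Finally I would determine the block form of $\psi$. A unitary map fixing $V$ pointwise preserves $V$, hence preserves the orthogonal complement $V^\perp = \mathbb{C}^k \oplus \{0\}$, on which it acts by some $U \in {\bf U}(k)$; therefore $\psi = U \oplus {\bf I}_m$, giving $H_{0\oplus f} \subseteq {\bf U}(k) \oplus {\bf I}_m$. The reverse inclusion is immediate, since $(U \oplus {\bf I}_m)(0,f(z)) = (0,f(z))$ for every $U \in {\bf U}(k)$. Hence $H_{0\oplus f} = {\bf U}(k) \oplus {\bf I}_m$, and conjugation by $\alpha$ yields that $H_g$ is conjugate to ${\bf U}(k) \oplus {\bf I}_m$. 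I expect no serious obstacle beyond keeping the two uses of Proposition 2.1 straight and verifying cleanly that a unitary fixing $V$ pointwise restricts to a unitary of $V^\perp$; the conceptual content is carried entirely by the minimality of $f$, which forces the image to span the full $\mathbb{C}^m$ factor.
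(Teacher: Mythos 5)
Your proposal is correct and follows essentially the same route as the paper: reduce to $g = 0 \oplus f$ via Proposition 2.1, normalize $f(0)=0$, use the fixed origin to conclude $\psi$ is unitary, and use minimality to see that the image of $0\oplus f$ spans $\{0\}\oplus\mathbb{C}^m$, forcing the block form $U \oplus {\bf I}_m$. You are in fact slightly more explicit than the paper at the final step, distinguishing between $\psi$ fixing the span pointwise and merely preserving it, and noting that unitarity is what forces $\psi$ to preserve the orthogonal complement.
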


\begin{proof}
It suffices to assume $g=0 \oplus f$. The general case follows from
this special case and Proposition 2.1. As usual, we can assume $f(0)=0$ and $g(0)=0$.
Since $f$ is minimal, we have $\mathrm{Span}_{\mathbb{C}} \{ g \}=0 \oplus
\mathbb{C}^m.$
Let $\psi \in H_f$. Then $\psi \circ f=f$. Again we let $z=0$ to get $\psi(0)=0.$ Thus
$\psi$ is unitary. As above, since $\psi$ is linear and preserves every $g(z)$,
 it preserves
$\mathrm{Span}_{\mathbb{C}}\{ g \}=0 \oplus \mathbb{C}^m$. Thus $\psi \in {\bf U}(k)
\oplus {\bf I}_m$. Hence $H_g \subseteq {\bf U}(k) \oplus {\bf I}_m$. The definition of $g$
yields the opposite inclusion. We conclude $H_g = {\bf U}(k) \oplus {\bf I}_m$.
\end{proof}

We are now ready to prove Theorem 3.1.

\bigskip

{\bf  Proof of Theorem 3.1:} We use minimality to show, for each $\gamma \in \Gamma_f$,
that there is a unique $\psi \in T_f$ such that $(\gamma, \psi) \in A_f$. Assume
$(\gamma, \psi_1), (\gamma, \psi_2) \in A_f$. Then 
$$(\gamma \circ \gamma^{-1},
\psi_1 \circ \psi_2^{-1})=({\bf I}_n, \psi_1 \circ \psi_2^{-1}) \in
A_f. $$ Thus $ \psi_1 \circ \psi_2^{-1} \in H_f.$ Proposition 3.1 implies that $H_f$ is
trivial and hence $\psi_1=\psi_2 $.

By the uniqueness of $\psi$, there is a well-defined map $\Phi: \Gamma_f \rightarrow \mathrm{Aut}(\mathbb{B}^N)$ 
with $\Phi(\gamma)=\psi$ where $(\gamma, \psi) \in A_f$. We next verify that $\Phi$ is a homomorphism.  Assume
$\Phi(\gamma_1)=\psi_1$ and $\Phi(\gamma_2)=\psi_2$. By the definition of $\Phi$, we
have $\psi_1 \circ f = f \circ \gamma_1$ and $\psi_2 \circ f =f \circ \gamma_2$.
Consequently,
$$\psi_2 \circ \psi_1 \circ f =\psi_2 \circ f  \circ \gamma_1=f \circ \gamma_2 \circ
\gamma_1.$$
We conclude that $\Phi(\gamma_2 \circ \gamma_1)=\psi_2
\circ \psi_1=\Phi(\gamma_2) \circ \Phi(\gamma_1).$ Hence $\Phi$ is a homomorphism.
The definition of equivariance now yields the equivariance of $f$ with respect to $\Phi$,
completing the proof.

\medskip

Let $f:\mathbb{B}^n \to \mathbb{B}^N$ be a minimal map. By Theorem 3.1, $f$
induces a homomorphism $\Phi: \Gamma_f \to \mathrm{Aut}(\mathbb{B}^N)$. We
say $\Phi$ {\it represents} $\Gamma=\Gamma_f$ in $\mathrm{Aut}(\mathbb{B}^N)$.

We discuss the kernel of the induced homomorphism $\Phi$ when $f$ is proper.
The behavior of $\Phi$ then depends on the
boundary regularity of $f$. Propositions 3.4 and 3.5 are known but
are expressed in different language in the literature. The proof of Proposition 3.2
uses the following simple fact.

\begin{remark} A proper holomorphic map is {\it finite}:
the inverse image of a point is a finite set. \end{remark}

\begin{proposition}
Let $f:\mathbb{B}^n \to \mathbb{B}^N$ be a minimal proper map. Let $\Phi$ be the induced homomorphism.
Then $\mathrm{Ker}(\Phi)$ a finite subgroup of
$\mathrm{Aut}(\mathbb{B}^n)$.
\end{proposition}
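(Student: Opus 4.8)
The plan is to identify the kernel explicitly and then show it is simultaneously compact and discrete, whence finite. By the construction of $\Phi$ in the proof of Theorem 3.1, $\Phi(\gamma)$ is the unique target automorphism $\psi$ with $f\circ\gamma=\psi\circ f$. Hence $\gamma\in\mathrm{Ker}(\Phi)$ exactly when $f\circ\gamma=\mathbf{I}_N\circ f=f$; that is, $\mathrm{Ker}(\Phi)=G_f=\{\gamma\in\mathrm{Aut}(\mathbb{B}^n):f\circ\gamma=f\}$. (Note $G_f\subseteq\Gamma_f$, since $f\circ\gamma=\mathbf{I}_N\circ f$ puts $(\gamma,\mathbf{I}_N)\in A_f$.) First I would record that $G_f$ is closed: if $\gamma_\nu\to\gamma$ then $\gamma_\nu(z)\to\gamma(z)$ for each $z$, as in the proof of Proposition 2.3, so continuity of $f$ together with $f\circ\gamma_\nu=f$ forces $f\circ\gamma=f$. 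Being a closed subgroup of the Lie group $\mathrm{Aut}(\mathbb{B}^n)$, $G_f$ is itself a Lie subgroup.

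Next I would prove compactness, which is where properness does the essential work. Take a sequence $\gamma_\nu\in G_f$ and write $\gamma_\nu=U_\nu\phi_{a_\nu}$ with $U_\nu\in\mathbf{U}(n)$ and $\phi_{a_\nu}(0)=a_\nu$. Then $\gamma_\nu(0)=U_\nu a_\nu$, so $\|\gamma_\nu(0)\|=\|a_\nu\|$. If some subsequence had $\|a_\nu\|\to 1$, then $\gamma_\nu(0)$ would approach $\partial\mathbb{B}^n$, and properness would force $f(\gamma_\nu(0))\to\partial\mathbb{B}^N$. But $f(\gamma_\nu(0))=f(0)$ is a fixed point of the interior $\mathbb{B}^N$, a contradiction. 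Hence $\{a_\nu\}$ lies in a compact subset of $\mathbb{B}^n$; since $\mathbf{U}(n)$ is compact, a subsequence of $\{\gamma_\nu\}$ converges, and its limit lies in the closed set $G_f$. Thus $G_f$ is compact.

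Finally I would show $G_f$ is discrete by ruling out a positive-dimensional identity component $G_f^0$. For fixed $z\in\mathbb{B}^n$ the orbit $\{\gamma(z):\gamma\in G_f^0\}$ is the continuous image of the connected group $G_f^0$, hence connected, and it is contained in the fiber $f^{-1}(f(z))$ because $f(\gamma(z))=f(z)$ for $\gamma\in G_f$. By Remark 3.1 this fiber is finite, so a connected subset of it is the single point $\{z\}$. Therefore every $\gamma\in G_f^0$ fixes every $z\in\mathbb{B}^n$, forcing $G_f^0=\{\mathbf{I}_n\}$; equivalently $G_f$ is discrete. A discrete compact subset of a metric space is finite, so $G_f=\mathrm{Ker}(\Phi)$ is finite.

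I expect the compactness step to be the main obstacle, since it is precisely here that properness is indispensable: for a non-proper $f$ one could have automorphisms pushing the origin toward the boundary while keeping $f$ invariant, yielding a noncompact — and possibly infinite — kernel. The discreteness step, by contrast, falls out cleanly once the finiteness of fibers is invoked, and the two together immediately give the conclusion.
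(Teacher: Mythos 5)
Your proof is correct, and its skeleton is the same as the paper's: identify $\mathrm{Ker}(\Phi)$ with $G_f$, show it is closed, use properness to show it is compact, and use finiteness of fibers (Remark 3.1) to rule out an infinite kernel. The one genuine divergence is the last step. The paper first invokes the fact (cited to [HT]) that a compact subgroup of $\mathrm{Aut}(\mathbb{B}^n)$ is contained in a conjugate of $\mathbf{U}(n)$, and then argues that an infinite such group would give some point an infinite orbit, contradicting finiteness of the map. You instead observe that the identity component $G_f^0$ has connected orbits lying inside finite fibers, hence acts trivially, so $G_f$ is discrete, and discrete plus compact gives finite. Your route buys a small economy: it bypasses the maximal-compact-subgroup theorem altogether, and it replaces the paper's unproved (though true) claim that an infinite compact group of unitaries moves some point infinitely often with a clean connectedness argument. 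Your compactness step is also phrased directly in terms of properness applied to the orbit of the origin rather than via finiteness of the fiber, but this is the same idea in substance. Both arguments ultimately rest on the same two pillars: properness confines the orbit of a point to a compact subset of the open ball, and finiteness of fibers kills any continuous or infinite part of the kernel.
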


\begin{proof}
That $\mathrm{Ker}(\Phi)$ is a subgroup of
$\mathrm{Aut}(\mathbb{B}^n)$ is clear. First we show that $\mathrm{Ker}(\Phi)$ is closed in
$\mathrm{Aut}(\mathbb{B}^n).$
Assume $\gamma_\nu \in \mathrm{Ker}(\Phi) $ and $\gamma_\nu$ converges to $\gamma
\in \mathrm{Aut}(\mathbb{B}^n)$. By continuity, we conclude that $\gamma \in
\mathrm{Ker}(\Phi)$.

Next we claim $\mathrm{Ker}(\Phi)$ is compact. If not, 
then $\mathrm{Ker}(\Phi)$ moves some point $a$ in $\mathbb{B}^n$ arbitrarily close
to the boundary. By Remark 3.1,  $f^{-1}(f(a))$
is a finite set, yielding a contradiction.

Thus $\mathrm{Ker}(\Phi)$ is a compact Lie subgroup of $\mathrm{Aut}(\mathbb{B}^n).$
By standard Lie group theory (cf. [HT]), it is contained in a conjugate of ${\bf
U}(n).$

We finally claim that $\mathrm{Ker}(\Phi)$ is finite. It
is contained in a conjugate of ${\bf U}(n)$. If it were infinite, then there would exist $a \in
\mathbb{B}^n$ such that $\{ \gamma(a): \gamma \in \mathrm{Ker}(\Phi)\}$ is infinite. This conclusion 
contradicts $f$ being a finite map.
\end{proof}

\begin{proposition}
Let $G$ be a finite subgroup of $\mathrm{Aut}(\mathbb{B}^n)$. Then there exists an 
$N$ and a holomorphic proper map $f: \mathbb{B}^n \to \mathbb{B}^N$ such that $\mathrm{Ker} (\Phi)=G$.
\end{proposition}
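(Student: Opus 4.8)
The plan is to realize $G$ as the invariant group $G_f=\{\gamma : f\circ\gamma=f\}$ of a suitable minimal proper map, since the discussion preceding Proposition 3.2 identifies $\mathrm{Ker}(\Phi)$ with $G_f$ for minimal proper $f$. First I would reduce to the case $G\subseteq\mathbf{U}(n)$. A finite subgroup of $\mathrm{Aut}(\mathbb{B}^n)$ is compact, so by the same Lie-theoretic fact used in Proposition 3.2 it is conjugate to a finite subgroup of $\mathbf{U}(n)$. By Proposition 2.2 the invariant group transforms by conjugation, namely $G_{f\circ\beta}=\beta^{-1}G_f\beta$, so it suffices to realize any conjugate of $G$. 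Thus assume $G\subseteq\mathbf{U}(n)$ is finite, and the goal becomes the construction of a minimal proper $G$-invariant $f$ with $G_f=G$.

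Next I would exploit the ring $\mathbb{C}[z]^G$ of $G$-invariant polynomials. Fix homogeneous generators $q_1,\dots,q_k$. Because $\mathbb{C}[z]$ is a finite module over $\mathbb{C}[z]^G$, the map $q=(q_1,\dots,q_k)$ separates $G$-orbits and has the origin as its only common zero; these components will later force $G_f\subseteq G$. The remaining task is to assemble them into a \emph{proper} $G$-invariant map. Concretely, I would seek constants $c_i>0$, small enough that $\sum_i c_i^2\,||q_i||^2 < 1$ on the sphere, together with $G$-invariant holomorphic functions $r_1,\dots,r_\ell$ satisfying $\sum_i c_i^2\,|q_i(z)|^2+\sum_j |r_j(z)|^2\equiv 1$ on $||z||^2=1$. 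Then $f=(c_1q_1,\dots,c_kq_k,r_1,\dots,r_\ell)$ is a holomorphic proper map $\mathbb{B}^n\to\mathbb{B}^N$ all of whose components are $G$-invariant.

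The hard part is the existence of the invariant ``fillers'' $r_j$: one must represent the $G$-invariant function $1-\sum_i c_i^2\,|q_i|^2$, which is positive on the sphere, as a \emph{finite} sum of squared moduli of $G$-invariant holomorphic functions. A guiding identity is $\Psi(z,\bar z):=1-\prod_{\gamma\in G}\bigl(1-\langle \gamma z,z\rangle\bigr)$, which is $G$-invariant because conjugation permutes $G$, and which is identically $1$ on the sphere since the factor $\gamma=\mathbf{I}$ equals $1-||z||^2=0$ there. For cyclic $G$ this $\Psi$ is already the squared norm of invariant monomials, so $f$ can be taken \emph{rational}; for non-cyclic $G$ Lichtblau's theorem [Li2] forbids rationality, so any such representation, and hence $f$, is necessarily non-rational. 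Securing finitely many invariant components $r_j$ in this non-rational regime is the crux of the argument and is exactly the content of the older existence results for group-invariant proper maps ([D]); I would invoke these rather than reprove them.

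Finally I would verify $G_f=G$. Invariance gives $G\subseteq G_f$ immediately. Since $q$ separates orbits and $G\cdot 0=\{0\}$, the fiber of $f$ over $f(0)$ is $\{0\}$; composing $f$ on the target side with the automorphism carrying $f(0)$ to $0$ (which preserves properness, invariance, and $G_f$ by Proposition 2.2) lets me assume $f(0)=0$ and $f^{-1}(0)=\{0\}$. Then any $\delta\in G_f$ fixes $0$, hence lies in $\mathbf{U}(n)$, and $f\circ\delta=f$ forces $q(\delta z)=q(z)$, so $\delta z\in Gz$ for every $z$. On the dense open set of points with trivial $G$-stabilizer, the assignment sending $z$ to the unique $\gamma\in G$ with $\delta z=\gamma z$ is continuous and $G$-valued, hence locally constant; thus $\delta z\equiv\gamma_0 z$ for a fixed $\gamma_0$, giving $\delta=\gamma_0\in G$ and $G_f=G$. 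Replacing $f$ by its minimalization (restricting the target to the affine span of the image, which leaves $G_f$ unchanged) produces a minimal proper map with $\mathrm{Ker}(\Phi)=G_f=G$, as required.
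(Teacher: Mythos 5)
Your proposal is correct and takes essentially the same approach as the paper: both reduce to the case $G \subseteq {\bf U}(n)$ by conjugating with an automorphism (via Proposition 2.2) and then delegate the hard existence step for finite unitary groups to prior work. Note only that the paper's reference for that step is Lichtblau's thesis [Li1, Theorem 4.3.4] rather than [D]; your extra sketch of the invariant-filler construction and the verification that $G_f = G$ is supplementary detail the paper omits.
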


\begin{proof}
We first assume $G$ is a subgroup of ${\bf U}(n).$ In this case, the statement was
proved in [Li1] (See Theorem 4.3.4 there). Next we assume $G$ is an arbitrary finite
subgroup of $\mathrm{Aut}(\mathbb{B}^n).$ By Lie group theory, $G$ is contained in a
conjugate 
of ${\bf U}(n).$ For some $\chi \in \mathrm{Aut}(\mathbb{B}^n),$ we thus have
$G_0=\chi \circ G \circ \chi^{-1} \subseteq {\bf U}(n).$ By the result above for ${\bf
U}(n),$ there is a holomorphic proper map $g$ for which $\mathrm{Ker} (\Phi_g)=G_0.$
Then $\mathrm{Ker} (\Phi_f)=G$ if $f=g \circ \chi.$ 

\end{proof}

We recall the notion of fixed-point-free subgroup of ${\bf U}(n)$. A finite subgroup $G$
of ${\bf U}(n)$ is {\it fixed-point-free} if the only element in $G$ with an eigenvalue
of $1$ is the identity. Thus the origin is the only fixed point in ${\bf C}^n$ under the action
of $G$. Equivalently, $G$ is fixed point free if each element of the group other than the identity
has no fixed points on the unit sphere. The next result 
appears in [Li1]. In the language of this paper, it shows that a boundary 
regularity assumption on  $f$ puts restrictions on
$\mathrm{Ker}(\Phi)$. We sketch the proof.

\begin{proposition}
Let $f: \mathbb{B}^n \rightarrow \mathbb{B}^N$ be a holomorphic proper map that is
continuously differentiable on the closed ball.
Then $\mathrm{Ker} (\Phi)$ is conjugate to a fixed-point-free finite subgroup
of ${\bf U}(n)$.
\end{proposition}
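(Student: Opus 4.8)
The plan is to reduce the statement to a boundary immersivity property of $f$ and then use the relation $f\circ\gamma=f$ infinitesimally. By Proposition 3.2, $\mathrm{Ker}(\Phi)=G_f$ is a finite subgroup of $\mathrm{Aut}({\mathbb B}^n)$ contained in a conjugate of ${\bf U}(n)$. Replacing $f$ by $f\circ\chi$ for a suitable $\chi$ conjugates $G_f$ (compare Proposition 2.2), and since this preserves both properness and $C^1$ regularity up to the boundary, I may assume outright that $G:=G_f\subseteq{\bf U}(n)$. By the definition of fixed-point-free given above, it suffices to show that the only $\gamma\in G$ with eigenvalue $1$ is the identity; equivalently, that no $\gamma\neq{\bf I}_n$ in $G$ fixes a unit vector $p\in\partial{\mathbb B}^n$.

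The key step is to prove that $f'(p)$ is injective for every $p\in\partial{\mathbb B}^n$. First, because $f$ is proper and continuous on the closed ball, $\|f(z)\|^2\to1$ as $z\to\partial{\mathbb B}^n$, so $\|f\|^2\equiv1$ on $\partial{\mathbb B}^n$ while $\|f\|^2<1$ inside. The Hopf lemma, applied to the subharmonic function $\|f\|^2$ attaining its maximum on the boundary, then gives a strictly positive outward normal derivative at $p$, so that $\mathrm{Re}\,\langle f'(p)\,p,f(p)\rangle>0$ and in particular $f'(p)p\neq0$. Next, differentiating the boundary identity $\|f\|^2=1$ along real curves in $\partial{\mathbb B}^n$, and replacing a tangent vector $v$ by $iv$, shows that $f'(p)$ carries the complex tangent space $T_p=p^{\perp}$ into $f(p)^{\perp}$. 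Writing any $w\in\mathrm{Ker}\,f'(p)$ as $w=ap+v$ with $v\in T_p$ forces $a\,\langle f'(p)p,f(p)\rangle=0$, hence $a=0$, so $\mathrm{Ker}\,f'(p)\subseteq T_p$. Finally, since $f$ maps the strongly pseudoconvex sphere into the sphere, it pulls back the definite Levi form of $\partial{\mathbb B}^N$ to a positive multiple of that of $\partial{\mathbb B}^n$ on $T_p$, which is exactly the injectivity of $f'(p)$ restricted to $T_p$. Combining these facts yields $\mathrm{Ker}\,f'(p)=0$.

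With injectivity in hand the conclusion is immediate. Suppose some $\gamma\in G$ with $\gamma\neq{\bf I}_n$ fixes a unit vector $p\in\partial{\mathbb B}^n$. Because $\gamma\in{\bf U}(n)$ is linear, differentiating $f\circ\gamma=f$ at $p$ and using $\gamma(p)=p$ gives $f'(p)(\gamma-{\bf I}_n)=0$, so $\mathrm{range}(\gamma-{\bf I}_n)\subseteq\mathrm{Ker}\,f'(p)$. Since $\gamma\neq{\bf I}_n$ is unitary, this range is nonzero, contradicting the injectivity of $f'(p)$. Hence no nonidentity element of $G$ has eigenvalue $1$, so $\mathrm{Ker}(\Phi)$ is conjugate to the fixed-point-free finite subgroup $G$ of ${\bf U}(n)$, as claimed.

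The main obstacle is the boundary immersivity of the second paragraph, specifically the tangential injectivity of $f'(p)$: the Levi-form computation naively requires two derivatives, whereas only $C^1$ regularity is assumed. The careful justification at the $C^1$ level is precisely the content supplied in [Li1] and is implicit in the boundary-regularity analysis of [F2]; in the final write-up I would either invoke it directly or reproduce the CR-geometric estimate showing that a $C^1$ proper map between balls restricts to a CR immersion of the boundary spheres.
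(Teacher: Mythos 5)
Your argument follows the paper's proof essentially verbatim: conjugate $\mathrm{Ker}(\Phi)$ into ${\bf U}(n)$, show via a Hopf-lemma argument that $f'$ has full rank at each boundary point, and derive a contradiction from a nonidentity unitary fixing a point of the sphere. You supply more detail than the paper on the boundary immersivity step (which the paper dispatches as ``a classical Hopf lemma argument,'' deferring to [Li1]) and correctly flag the only delicate point, namely tangential injectivity at merely $C^1$ regularity; your final contradiction via $f'(p)(\gamma-{\bf I}_n)=0$ is just a harmless linearization of the paper's local-embedding argument.
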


\begin{proof}
After composing $f$ with a source automorphism, we can assume that $\mathrm{Ker}(\Phi)$
is contained in ${\bf U}(n)$. Suppose $\gamma \in \mathrm{Ker} (\Phi)$ and that
$\gamma$ is not the identity. We show that $1$ cannot be an eigenvalue of $\gamma$.

Since $f$ is $C^1$-smooth to the boundary, by a classical Hopf lemma argument, $f$
is of full rank at each boundary point and thus is a local embedding there. Assume
for some $\gamma \in \mathrm{Ker}(\Phi)$ that $1$ is an eigenvalue of $\gamma$.
Let $E$ be the corresponding eigenspace. If $\gamma$ is not the identity,
then $E$ is a proper subspace. Then $\gamma$ maps any point $q$ close to
but not on $E$ to a different point nearby. Since $f \circ \gamma =f$, the map
$f$ cannot be injective near $E$. But $E$ intersects the sphere, contradicting
$f$ being a local embedding there.\end{proof}

When $f$ is $C^{\infty}-$smooth up to the boundary, $\mathrm{Ker} (\Phi)$
is quite restricted. The following result  gives a complete list of the possible
groups that can arise. See [D] for a proof
and for the corresponding maps. See also [G] for related results when the target 
is a generalized ball.

\begin{proposition}
Let $f:{\mathbb B}^n \to {\mathbb B}^N$ be a rational holomorphic proper map. Then $\mathrm{Ker} (\Phi)$ is cyclic.
Furthermore $\mathrm{Ker} (\Phi)$ is
conjugate to one of the following:
\begin{itemize}
\item $G$ is the cyclic group generated by $\eta {\bf I}_n$, where $\eta$ is a primitive $m$-th root of unity.
(Each $m$ is possible.)
\item $G$ is the cyclic group generated by $\eta {\bf I}_j \oplus \eta^2 {\bf I}_k$, 
where $\eta$ is a primitive odd root of unity. Here $j+k=n$. (Each odd natural number is possible.)
\item $G$ is the cyclic group of order $7$ generated by 
$\eta {\bf I}_j \oplus \eta^2 {\bf I}_k \oplus \eta^4{\bf I}_l$. Here $j+k+l=n$.

\end{itemize}
\end{proposition}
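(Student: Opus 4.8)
The plan is to reduce to the hypotheses of Proposition 3.4 and then combine the cyclicity theorem of [Li2] with the representation-theoretic classification of [D]. First I record that a rational proper map between balls is automatically smooth up to the boundary. Writing $f=p/q$ in lowest terms, properness forces $q$ to be nonvanishing on $\overline{{\mathbb B}^n}$: a zero in the open ball would produce an interior pole, contradicting $\|f\|<1$ there, while a zero on the sphere is incompatible with $\|f\|\to 1$ at the boundary. Hence $f$ extends real-analytically, and in particular is $C^1$, on $\overline{{\mathbb B}^n}$. Since $\mathrm{Ker}(\Phi)=G_f$ depends only on the relation $f\circ\gamma=f$, I may pass to the minimal reduction of $f$ without changing this group or the rational and proper hypotheses. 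Thus Proposition 3.4 applies and, after conjugation, $G:=\mathrm{Ker}(\Phi)$ is a fixed-point-free finite subgroup of ${\bf U}(n)$ with $f\circ\gamma=f$ for all $\gamma\in G$.

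The second step is cyclicity. Being fixed-point-free, $G$ acts freely on the sphere $S^{2n-1}$, so a priori it could be any finite group with periodic cohomology (the Wolf groups: cyclic, binary polyhedral, and so on). The extra input that $G$ supports an invariant \emph{rational proper} map is exactly what collapses this list to the cyclic case; this is Lichtblau's theorem [Li2], which I would invoke directly. I note that the real content of [Li2] is the abelianness of $G$: once $G$ is abelian, cyclicity is elementary, because a finite abelian fixed-point-free subgroup of ${\bf U}(n)$ is simultaneously diagonalizable, and for $({\mathbb Z}/p)^2$ each coordinate character $\chi_j:({\mathbb Z}/p)^2\to\mu_p$ cannot be injective, so some nonidentity element lies in its kernel and hence has eigenvalue $1$, contradicting fixed-point-freeness; thus every Sylow subgroup is cyclic and $G$ is cyclic.

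With $G=\langle\gamma\rangle$ cyclic of order $m$, I diagonalize the generator as $\gamma=\mathrm{diag}(\zeta^{a_1},\dots,\zeta^{a_n})$ with $\zeta$ a primitive $m$-th root of unity, fixed-point-freeness being equivalent to $\gcd(a_j,m)=1$ for all $j$. Two cost-free normalizations are available: permuting coordinates reorders the $a_j$, and replacing $\gamma$ by a different generator $\gamma^u$ rescales the entire exponent tuple by a unit $u$ modulo $m$. I then determine which normalized weight patterns actually support a $G$-invariant proper rational map. The tool is the invariant Hermitian polynomial $\|f\|^2$, which is $G$-invariant and identically $1$ on the sphere: writing $f=p/q$, both $\|p\|^2$ and $|q|^2$ are $G$-invariant (the character by which $p$ and $q$ transform is unimodular), and $\|p(z)\|^2-|q(z)|^2$ vanishes on $\|z\|^2=1$, hence factors as $(\|z\|^2-1)$ times a $G$-invariant Hermitian polynomial. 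Expanding everything in the monomials $z^\alpha\bar z^\beta$ that are $G$-invariant, namely those with $\sum_j a_j(\alpha_j-\beta_j)\equiv 0 \pmod m$, and using the full-rank-at-the-boundary property from the Hopf-lemma argument of Proposition 3.4 to force the correct first-order behavior, one constrains the admissible support of $\|f\|^2$. Carrying out this analysis, which is the content of [D], shows that up to the above normalizations the weights are all equal to $1$ (any $m$), lie in $\{1,2\}$ with $m$ odd, or lie in $\{1,2,4\}$ with $m=7$, and each family is realized by an explicit invariant map produced in [D].

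The main obstacle is concentrated in the last two steps and is genuinely arithmetic rather than formal. Proving cyclicity requires using rationality and properness to eliminate the non-abelian Wolf groups, which is not a soft argument. Harder still is the ``only these three'' direction of the classification: one must control the invariant polynomial $\|f\|^2$ finely enough to exclude every other fixed-point-free cyclic representation. This is where the special arithmetic surfaces, namely the oddness of $m$ in the $\{1,2\}$ case and the rigidity of $m=7$ in the $\{1,2,4\}$ case, where $1,2,4$ are the three distinct nonzero residues closed under doubling modulo $7$ and summing to $7$. Exhibiting the maps for the three families is, by contrast, routine once the correct weights have been identified.
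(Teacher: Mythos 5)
The paper offers no proof of this proposition at all --- it defers entirely to [D] ``for a proof and for the corresponding maps'' and to [Li2] for cyclicity --- and your outline likewise delegates the two substantive steps (eliminating the non-abelian fixed-point-free groups and the arithmetic classification of admissible weights) to exactly those same sources, so the approaches coincide. Your added preliminaries (reduction to the minimal case, boundary regularity, and Proposition 3.4) are correct in substance, though note that for $n\ge 2$ the nonvanishing of the denominator on the closed ball is a genuine theorem (Cima--Suffridge) rather than the one-line observation you give, since ``lowest terms'' does not preclude common zeros of $p$ and $q$ on the sphere.
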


\begin{remark} The map $z \mapsto z^{\otimes m}$ provides an example of the first type. When $n=2$,
maps of the second type have the largest possible degree given the target dimension. See [DKR].
The simplest example of the third type maps ${\mathbb B}_3$ to ${\mathbb B}_{17}$ and appears in [D]. \end{remark}

When $f:{\mathbb B}^n \to {\mathbb B}^m$, one can artificially increase
the target dimension by considering the map $0 \oplus f$. This map is not minimal.
Lemma 3.2 shows how this construction impacts the group $H_f$. We close this section with a simple result showing
that this construction does not change $\Gamma_f$. 

\begin{proposition}
Let $f:{\mathbb B}^n \to {\mathbb B}^m$ be minimal. For $N>m$, put $g=0 \oplus f$. Then $\Gamma_f = \Gamma_g$.
\end{proposition}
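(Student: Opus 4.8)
The plan is to realize $\mathbb{B}^m$ as the affine slice $\Sigma = V_0 \cap \mathbb{B}^N$, where $V_0 = \{0\}\times\mathbb{C}^m \subseteq \mathbb{C}^N$, via the embedding $\iota(w) = (0,w)$, so that $g = \iota \circ f$ takes values in $\Sigma$. I would then prove the two inclusions $\Gamma_f \subseteq \Gamma_g$ and $\Gamma_g \subseteq \Gamma_f$ separately. The inclusion $\Gamma_f \subseteq \Gamma_g$ is the routine one and uses no minimality: given $\gamma \in \Gamma_f$ with $f\circ\gamma = \psi\circ f$ for some $\psi\in\mathrm{Aut}(\mathbb{B}^m)$, I would extend $\psi$ to a target automorphism $\tilde\psi\in\mathrm{Aut}(\mathbb{B}^N)$ that preserves $\Sigma$ and restricts there to $\psi$. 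Writing $\psi = L\phi_c$ with $L\in{\bf U}(m)$, one may take $\tilde\psi$ to be the product of ${\bf I}_{N-m}\oplus L$ and the transvection $\phi_{(0,c)}$; a direct computation shows each factor maps $\Sigma$ into itself and restricts to the corresponding factor of $\psi$. Then
$$\tilde\psi\circ g = \tilde\psi\circ\iota\circ f = \iota\circ\psi\circ f = \iota\circ f\circ\gamma = g\circ\gamma,$$
so $(\gamma,\tilde\psi)\in A_g$ and $\gamma\in\Gamma_g$.

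For the reverse inclusion the entire point is to show that, for $(\gamma,\Psi)\in A_g$, the target automorphism $\Psi$ must preserve the slice $\Sigma$. Here is where I would use minimality. Since every element of $\mathrm{Aut}(\mathbb{B}^N)$ is linear fractional, $\Psi$ extends to a projective linear automorphism of ${\mathbb{CP}}^N$; such a map carries the projective closure of $V_0$, an $m$-dimensional projective subspace, to another $m$-dimensional projective subspace, so $\Psi(\Sigma) = V_1\cap\mathbb{B}^N$ for some complex-affine $V_1$ with $\dim_{\mathbb{C}}V_1 = m$. Because $\gamma$ is an automorphism, $\Psi(g(\mathbb{B}^n)) = g(\gamma(\mathbb{B}^n)) = g(\mathbb{B}^n)\subseteq\Sigma\subseteq V_1$. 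Minimality of $f$ says $f(\mathbb{B}^n)$ lies in no proper affine subspace of $\mathbb{C}^m$, so $g(\mathbb{B}^n)$ affinely spans $V_0$; hence $V_0\subseteq V_1$, and equality of dimensions forces $V_1 = V_0$. Thus $\Psi$ maps $\Sigma$ onto $\Sigma$.

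Once $\Psi$ preserves $\Sigma$, it restricts to a biholomorphism of $\Sigma\cong\mathbb{B}^m$, so $\psi := \iota^{-1}\circ\Psi|_{\Sigma}\circ\iota$ lies in $\mathrm{Aut}(\mathbb{B}^m)$. Evaluating $\Psi\circ g = g\circ\gamma$ and using $g = \iota\circ f$ gives $\iota(\psi(f(z))) = \iota(f(\gamma(z)))$ for all $z$; applying $\iota^{-1}$ yields $\psi\circ f = f\circ\gamma$, so $\gamma\in\Gamma_f$. Combining the two inclusions gives $\Gamma_f = \Gamma_g$.

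The main obstacle is precisely the step that $\Psi$ preserves $\Sigma$; the rest is formal. The two facts that drive it are that ball automorphisms are projective linear, and hence send affine slices to affine slices of the same dimension, and that minimality pins down the affine span of the image of $g$ to be all of $V_0$. I would emphasize that minimality is essential: it is exactly the hypothesis preventing $g(\mathbb{B}^n)$ from lying in a smaller affine subspace that some $\Psi$ could carry off the slice. This also explains why the extra symmetries recorded in $H_g$ (conjugate to ${\bf U}(N-m)\oplus{\bf I}_m$ by Lemma 3.2) do not enlarge the first-factor projection, since they fix $\Sigma$ and leave the relation $\psi\circ f = f\circ\gamma$ intact.
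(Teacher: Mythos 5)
Your proof is correct, and it takes a genuinely different route from the paper's. The paper normalizes first: it assumes $f(0)=0$, composes the relation $g\circ\gamma=\psi\circ g$ with an automorphism $\varphi={\bf I}_{N-m}\oplus\phi$ chosen so that the resulting target automorphism fixes the origin and is therefore unitary; minimality then enters through the \emph{linear} span $\mathrm{Span}_{\mathbb C}(g)=0\oplus{\mathbb C}^m$, exactly as in the proof of Lemma 3.2, forcing that unitary to respect the splitting ${\mathbb C}^{N-m}\oplus{\mathbb C}^m$ and hence producing the required source-side relation for $f$. You avoid the normalization entirely by exploiting the projective-linear structure of $\mathrm{Aut}({\mathbb B}^N)$: a target automorphism carries the slice $\Sigma=V_0\cap{\mathbb B}^N$ to another affine slice of the same dimension, and minimality --- now via the \emph{affine} span of $g({\mathbb B}^n)$ --- pins that slice back to $V_0$, so $\Psi$ restricts to an automorphism of $\Sigma\cong{\mathbb B}^m$. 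What your version buys: it makes the easy inclusion $\Gamma_f\subseteq\Gamma_g$ explicit (the extension $\tilde\psi=({\bf I}_{N-m}\oplus L)\circ\phi_{(0,c)}$), which the paper leaves implicit in its ``if and only if'' conclusion, and it shows transparently why the extra stabilizer $H_g$, conjugate to ${\bf U}(N-m)\oplus{\bf I}_m$, cannot enlarge the first-factor projection; what the paper's version buys is a reduction to elementary linear algebra about unitary matrices that runs in parallel with Lemma 3.2. One small slip in your write-up: the chain $\Psi(g({\mathbb B}^n))=g({\mathbb B}^n)\subseteq\Sigma\subseteq V_1$ asserts $\Sigma\subseteq V_1$ before that is known; what you actually need, and what your argument delivers, is $g({\mathbb B}^n)=\Psi(g({\mathbb B}^n))\subseteq\Psi(\Sigma)\subseteq V_1$, after which the affine-span argument gives $V_0\subseteq V_1$ and then $V_0=V_1$ by equality of dimensions.
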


\begin{proof}
Put $k=N-m$. We may assume that $f(0)=0$. If $(\gamma, \psi) \in A_f$, then
$$ 0 \oplus (f \circ \gamma) = \psi(0 \oplus f). \eqno (2.1) $$
Let $\phi \in {\rm Aut}({\mathbb B}^n)$ be such that $(\phi \circ f \circ \gamma)(0)=0$.
We may regard $({\bf I}_k, \phi)$ as an element $\varphi$ of ${\rm Aut}({\mathbb B}^N)$.
Then $\varphi( 0 \oplus (f \circ \gamma))= 0 \oplus (\phi \circ f \circ \gamma)$. Hence $\varphi$ maps $0$ to $0$.
By (2.1), 
$$ 0 \oplus (\phi \circ f \circ \gamma) = (\varphi \circ \psi) (0 \oplus f) = (\varphi \circ \psi)(g). \eqno (2.2) $$
Evaluating at $0$ shows that the automorphism $\varphi \circ \psi$ maps $0$ to $0$ 
and hence is unitary. Since $f$ is minimal, $\mathrm{Span}_{\mathbb{C}}\{ g \}=0 \oplus \mathbb{C}^m$.
As in the proof of Lemma 3.2, we conclude that $\varphi \circ \psi$
lies in ${\bf U}(k) \oplus {\bf I}_m$. Now (2.2) implies $\phi \circ f \circ \gamma =f$.
Thus $\gamma \in \Gamma_f$ if and only if $\gamma \in \Gamma_g$.
\end{proof}

\section{Groups and regularity}

We develop the tools to prove Theorem 4.1.
The next lemma illustrates how group invariance allows
one to prove that certain power series are in fact polynomials.
We establish Theorem 4.1 by generalizing the proof of this lemma.

\begin{lemma} Let $\Omega$ be an open ball about $0 \in {\mathbb C}^n$ and suppose
that $H:\Omega \to {\mathbb C}^N$ is holomorphic. Assume for each $\gamma \in {\bf U}(1) \oplus ...\oplus {\bf U}(1)$
that $||H \circ \gamma||^2 = ||H||^2$. Then there is a monomial map $G$ and a unitary $U$ such that $H=U\circ G$.
\end{lemma}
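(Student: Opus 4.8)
The plan is to expand $H$ in a power series about the origin and read off exactly what torus invariance says about its coefficients. Write $H=(H_1,\dots,H_N)$ with $H_j(z)=\sum_\alpha c_{j,\alpha}z^\alpha$, and collect the coefficients into vectors $c_\alpha=(c_{1,\alpha},\dots,c_{N,\alpha})\in{\mathbb C}^N$, so that $H(z)=\sum_\alpha z^\alpha c_\alpha$ and
$$ ||H(z)||^2 = \sum_{\alpha,\beta} \langle c_\alpha, c_\beta\rangle\, z^\alpha \overline{z^\beta}. $$
A diagonal unitary $\gamma\in{\bf U}(1)\oplus\cdots\oplus{\bf U}(1)$ acts by $\gamma z=(e^{i\theta_1}z_1,\dots,e^{i\theta_n}z_n)$, so $(\gamma z)^\alpha=e^{i\langle\theta,\alpha\rangle}z^\alpha$ where $\langle\theta,\alpha\rangle=\sum_k\theta_k\alpha_k$. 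The ball $\Omega$ is invariant under this torus, so the hypothesis $||H\circ\gamma||^2=||H||^2$ reads
$$ \sum_{\alpha,\beta}\langle c_\alpha,c_\beta\rangle\, e^{i\langle\theta,\alpha-\beta\rangle}\, z^\alpha\overline{z^\beta} = \sum_{\alpha,\beta}\langle c_\alpha,c_\beta\rangle\, z^\alpha\overline{z^\beta} $$
for every $\theta\in{\mathbb R}^n$ and every $z\in\Omega$.

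First I would extract, for each fixed $\delta\neq 0$, the $\theta$-Fourier mode $e^{i\langle\theta,\delta\rangle}$ of the left-hand side; since the right-hand side is independent of $\theta$, this mode must vanish identically in $z$. Because the real-analytic monomials $z^\alpha\overline{z^\beta}$ are linearly independent, I conclude $\langle c_\alpha,c_\beta\rangle=0$ whenever $\alpha\neq\beta$; equivalently one may integrate $||H(\gamma z)||^2$ against the character $e^{-i\langle\theta,\alpha-\beta\rangle}$ over the torus to isolate each inner product. Thus the coefficient vectors $\{c_\alpha\}$ are pairwise orthogonal in ${\mathbb C}^N$, and this orthogonality is the entire content of the hypothesis. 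A set of pairwise orthogonal nonzero vectors in ${\mathbb C}^N$ has at most $N$ elements, so $c_\alpha=0$ for all but finitely many $\alpha$; let $S$ be the finite set of indices with $c_\alpha\neq 0$, so $|S|\le N$ and $H(z)=\sum_{\alpha\in S}z^\alpha c_\alpha$ is a polynomial.

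To finish I would normalize: set $r_\alpha=||c_\alpha||$ and $e_\alpha=c_\alpha/r_\alpha$ for $\alpha\in S$, so $\{e_\alpha\}_{\alpha\in S}$ is orthonormal. Define the monomial map $G:\Omega\to{\mathbb C}^N$ whose component indexed by $\alpha\in S$ is $z\mapsto r_\alpha z^\alpha$ (and whose remaining components are zero), and let $U\in{\bf U}(N)$ be any unitary whose columns indexed by $S$ are the $e_\alpha$, obtained by completing $\{e_\alpha\}$ to an orthonormal basis of ${\mathbb C}^N$. Then
$$ (U\circ G)(z) = \sum_{\alpha\in S} r_\alpha z^\alpha\, e_\alpha = \sum_{\alpha\in S} z^\alpha c_\alpha = H(z), $$
which is the desired factorization. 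The main obstacle is purely the coefficient bookkeeping in the orthogonality step, namely justifying that one may equate coefficients of distinct $z^\alpha\overline{z^\beta}$; this is routine via uniqueness of real-analytic power series expansions or Fourier analysis on the $n$-torus, and once orthogonality is in hand the finiteness of $S$ and the construction of $U$ and $G$ are automatic. This is precisely the mechanism that Theorem 4.1 will generalize, replacing the full maximal torus by the one-parameter subgroup generated by a single $M\in\mathfrak{g}$.
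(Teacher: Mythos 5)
Your proposal is correct and follows essentially the same route as the paper: expand $H$ in a power series, use invariance under the diagonal torus to conclude $\langle c_\alpha,c_\beta\rangle=0$ for $\alpha\neq\beta$, deduce from finite-dimensionality of ${\mathbb C}^N$ that only finitely many coefficient vectors are nonzero, and then factor $H$ as a unitary composed with the monomial map built from the surviving terms. Your write-up is somewhat more explicit about the final construction of $U$ and $G$, but the mechanism is identical.
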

\begin{proof} Since $H$ is holomorphic and $\Omega$ is a ball about $0$, we can expand $H$ in a convergent power
series about $0$. Put
$$ H(z) = \sum_\alpha C_\alpha z^\alpha. $$
Then we have
$$ ||H(z)||^2 = \sum_{\alpha,\beta} \langle C_\alpha, C_\beta \rangle z^\alpha {\overline z}^\beta. $$
Let $\gamma$ be a diagonal unitary matrix with eigenvalues $e^{i \theta_j}$. Then the assumption 
$||H \circ \gamma||^2 = ||H||^2$ implies
$$ \sum_{\alpha,\beta} \langle C_\alpha, C_\beta \rangle  z^\alpha {\overline z}^\beta = \sum_{\alpha,\beta} \langle C_\alpha, C_\beta \rangle z^\alpha {\overline z}^\beta  e^{i \theta(\alpha-\beta)}. \eqno (3) $$
By assumption, equation (3) holds for all choices of the $\theta_j$. Equating Taylor coefficients then shows, whenever $\alpha \ne \beta$, that $ \langle C_\alpha, C_\beta \rangle = 0$. 
Since each coefficient $C_\alpha$ lies in the finite-dimensional space ${\mathbb C}^N$, they are linearly
dependent and orthogonal. Hence there are only finitely many non-zero coefficient vectors. Thus $H$ is a polynomial. Since the coefficient vectors are orthogonal,
there is a unitary map $U$ such that $H=U\circ G$, where $G$ is the monomial map whose components are the monomials
arising in $H$. \end{proof}

The idea of using circular symmetries to study power series appears already in [BM].
We next establish Theorem 4.1 from the introduction. 

\begin{definition} Let ${\mathcal C} = {\mathcal C}(n)$ denote the collection of continuously differentiable maps
$\gamma: (-\pi, \pi) \to {\bf U}(1) \oplus ... \oplus {\bf U}(1)$ such that $\gamma(0)= {\bf I}_n$.
\end{definition}

For $\gamma \in {\mathcal C}$, we note that $\gamma'(0)= iL$, where $i^2=-1$ and $L=L^*$ is real and diagonal.
(The Lie algebra of the unitary group is the skew-Hermitian matrices.)
The eigenvalues of $L$ are real. When they all have the same sign, we obtain
a criterion guaranteeing that a holomorphic map is in fact a polynomial.

\begin{proposition} Let $\Omega$ be a ball about $0$ in ${\mathbb C}^n$ and
suppose $f:\Omega \to {\mathbb C}^N$ is holomorphic.
For some $\gamma \in{\mathcal C}$, assume that $||f \circ \gamma||^2 = ||f||^2$.
Put $L =-i \gamma'(0)$. If all the eigenvalues of $L$ have the same sign, then $f$ is a polynomial.
More generally, if $L$ has $k$ eigenvalues of the same sign, then there is a $k$-dimensional vector subspace
$V$ such that the restriction of $f$ to $V$ is a polynomial.
\end{proposition}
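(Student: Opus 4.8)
The plan is to expand $f$ in a power series about the origin, convert the invariance $||f\circ\gamma||^2=||f||^2$ into orthogonality relations among the coefficient vectors, and then run a counting argument generalizing the one in Lemma 4.1. Write $f(z)=\sum_\alpha C_\alpha z^\alpha$ with $C_\alpha\in\mathbb{C}^N$, and write $\gamma(t)=\mathrm{diag}(e^{i\theta_1(t)},\dots,e^{i\theta_n(t)})$ with each $\theta_j(0)=0$, so that $(\gamma(t)z)^\alpha=e^{i\langle\theta(t),\alpha\rangle}z^\alpha$. Substituting into $||f(\gamma(t)z)||^2=||f(z)||^2$ and equating coefficients of the linearly independent monomials $z^\alpha\overline{z}^\beta$ gives, for every pair $(\alpha,\beta)$ and every $t\in(-\pi,\pi)$,
\[ \langle C_\alpha,C_\beta\rangle\left(e^{i\langle\theta(t),\alpha-\beta\rangle}-1\right)=0. \]

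First I would extract the first-order consequence. If $\langle C_\alpha,C_\beta\rangle\ne 0$, then $e^{i\langle\theta(t),\alpha-\beta\rangle}=1$ for all $t$; since $t\mapsto\langle\theta(t),\alpha-\beta\rangle$ is continuous, takes values in $2\pi\mathbb{Z}$, and vanishes at $t=0$, it is identically zero, and differentiating at $t=0$ yields $\langle\lambda,\alpha-\beta\rangle=0$. Here $\lambda=(\lambda_1,\dots,\lambda_n)=\theta'(0)$ is the vector of eigenvalues of $L=-i\gamma'(0)=\mathrm{diag}(\lambda_1,\dots,\lambda_n)$. Introducing the weight $w(\alpha)=\langle\lambda,\alpha\rangle$, the relation now reads: $\langle C_\alpha,C_\beta\rangle\ne 0$ forces $w(\alpha)=w(\beta)$, so coefficient vectors of distinct weight are mutually orthogonal.

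Next, suppose every $\lambda_j$ has the same sign, say $\lambda_j>0$ (replace $L$ by $-L$ otherwise). Then $w(\alpha)=\sum_j\lambda_j\alpha_j\ge 0$, with equality only at $\alpha=0$. Grouping the nonzero coefficient vectors by weight, distinct weights give mutually orthogonal nonzero subspaces of $\mathbb{C}^N$, so at most $N$ weight values occur; and for each fixed value $u$ the inequalities $\alpha_j\le u/\lambda_j$ leave only finitely many admissible $\alpha$. Hence only finitely many $C_\alpha$ are nonzero and $f$ is a polynomial. For the general statement I would set $V=\mathrm{span}\{e_j:\lambda_j>0\}$ (interchanging the two signs if the negative eigenvalues are more numerous), a $k$-dimensional coordinate subspace because $L$ is diagonal. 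The restriction $f|_V$ collects exactly the $C_\alpha z^\alpha$ with $\alpha$ supported on $\{j:\lambda_j>0\}$, and for such $\alpha$ the weight $w(\alpha)$ is again a sum of strictly positive terms; the identical orthogonality-and-finiteness argument then shows $f|_V$ is a polynomial.

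I expect the only real content beyond the bookkeeping of Lemma 4.1 to lie in the two finiteness inputs of the counting step: that distinct weights produce orthogonal coefficient subspaces, so that $\dim\mathbb{C}^N=N$ caps the number of weights, and that each weight level is finite. The latter is exactly where the common-sign hypothesis is indispensable, since mixed signs would permit infinitely many $\alpha$ of a fixed weight (for instance $\lambda_1=1$, $\lambda_2=-1$ makes $\alpha_1-\alpha_2=u$ solvable infinitely often), and this is precisely why we must pass to the subspace $V$ on which only same-sign eigenvalues act.
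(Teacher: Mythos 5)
Your proof is correct. The first half — expanding $f$ in a power series, equating coefficients of $z^\alpha\overline{z}^\beta$ in $\|f\circ\gamma(t)\|^2=\|f\|^2$, and differentiating at $t=0$ to conclude that $\langle C_\alpha,C_\beta\rangle\ne 0$ forces $\langle\lambda,\alpha-\beta\rangle=0$ — is the same as the paper's (your lifting-to-$2\pi\mathbb{Z}$ justification is if anything a little more careful than the paper's direct differentiation of its equation (4)). Where you genuinely diverge is the finiteness step. The paper sets $K_1=\min m_j$, $K_2=\max m_j$, shows $\mathbf{m}\cdot(\alpha-\beta)=0$ implies $K_1|\alpha|\le K_2|\beta|$, picks multi-indices $\alpha_1,\dots,\alpha_\nu$ whose coefficient vectors span the (finite-dimensional) span $W$ of all coefficients, and then observes that any $\eta$ with $|\eta|>(K_2/K_1)|\alpha_j|$ for all $j$ has $c_\eta$ orthogonal to a spanning set of $W$, hence $c_\eta=0$. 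You instead stratify by the weight $w(\alpha)=\langle\lambda,\alpha\rangle$: distinct weight levels span mutually orthogonal subspaces of $\mathbb{C}^N$, so at most $N$ levels carry nonzero coefficients, and each level $\{\alpha: w(\alpha)=u\}$ is finite because $0\le\alpha_j\le u/\lambda_j$. Both arguments rest on the finite-dimensionality of the target; yours yields the cleaner quantitative statement that the nonzero coefficients occupy at most $N$ weight levels, while the paper's is closer in spirit to its Lemma 4.1 and produces an explicit degree threshold $({K_2}/{K_1})\max_j|\alpha_j|$ beyond which all coefficients vanish. The reduction of the $k$-eigenvalue case to the coordinate subspace $V$ is identical in both treatments.
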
 

\begin{proof} 
Put $\theta(t) = (\theta_1(t),..., \theta_n(t))$, where 
$\gamma(t)$ is diagonal with eigenvalues $e^{i \theta_j(t)}$.
We expand $f$ in a (vector-valued) power series convergent in $\Omega$:
$$ f(z) = \sum_\alpha c_\alpha z^\alpha. $$
We are given that $||f(z)||^2 = ||f(\gamma(t)z)||^2$ for all $t \in (-\pi,\pi)$ and all $z \in \Omega$.
Equating Taylor coefficients yields, for each pair $\alpha, \beta$ of multi-indices, that
$$ \langle c_\alpha, c_\beta \rangle = e^{i \theta(t) \cdot (\alpha - \beta)}\langle c_\alpha, c_\beta \rangle $$
and hence we have
$$ 0 = \langle c_\alpha, c_\beta \rangle \ \left(1 - e^{i\theta(t) \cdot (\alpha - \beta)} \right). \eqno (4) $$
Write
${\bf m} = (m_1,...,m_n)$, where the $m_j$ are the eigenvalues of $L$. Differentiate (4) and evaluate at $t=0$
to obtain
$$ 0 = \langle c_\alpha, c_\beta \rangle \left( {\bf m} \cdot (\alpha-\beta)\right). \eqno (5) $$
We will show that $c_\alpha = 0$ for all but finitely many $\alpha$. 

Suppose first that the eigenvalues are $L$ have the same sign. After replacing $t$ by $-t$
we may assume they are all positive. 
Let $K_1= \min(m_j)$ and $K_2= \max(m_j)$. Thus $K_2\ge K_1 > 0$. Let $\alpha$ and $\beta$
be multi-indices for which $ {\bf m}\cdot (\alpha - \beta) = 0$;
then
$$ K_1 |\alpha| = K_1 \sum \alpha_j \le \sum m_j \alpha_j  = \sum m_j \beta_j \le K_2 \sum \beta_j = K_2 |\beta|. \eqno (6) $$

Let $W$ be the (finite-dimensional) span of the coefficients $c_\alpha$. Choose $\alpha_1,...,\alpha_\nu $ such that
the $c_{\alpha_j}$ span $W$. Choose a multi-index $\eta$ with  $|\eta| > {K_2 \over K_1} \ |\alpha_j|$ for
$1 \le j \le \nu$. By (6), ${\bf m} \cdot (\eta - \alpha_j) \ne 0$ for all $j$.
Since $\langle c_\alpha, c_\beta \rangle \ {\bf m}\cdot ({\alpha - \beta}) = 0$ for all $\alpha, \beta$, we conclude
that $\langle c_\eta, c_{\alpha_j} \rangle = 0$ for all $j$. Therefore $c_\eta = 0$ and hence
there are only finitely many non-vanishing coefficient vectors.
Thus $f$ is a polynomial.

Next suppose that $L$ has $k$ eigenvalues of the same sign. After renumbering the coordinates
and replacing $t$ by $-t$ if necessary, we may assume that these are positive and
correspond to the first $k$ coordinates. Setting the remainder of the variables equal to $0$
puts us in the situation above. The conclusion follows.
\end{proof}

\begin{remark}One can draw stronger conclusions. For example, when $n=1$,
equation (5) implies that the vectors $c_\alpha$ are mutually orthogonal
and hence that $f$ is an orthogonal sum of monomials. \end{remark}

\begin{remark} When $L$ has $N_+$ positive and $N_{-}$ negative eigenvalues
we can conclude (with obvious notation) that $f(z',0)$ and $f(0, z'')$ are both polynomials. 
When $L$ has eigenvalues of both signs, however, $f$ need not be a polynomial. \end{remark}

\begin{example} Let $\gamma(t)$ be the diagonal unitary matrix
with eigenvalues $e^{it}$ and $e^{-it}$. The eigenvalues of $L$ are then $\pm 1$.
Assume $f$ is a function of the product $z_1z_2$.
Then $f(\gamma(t)z) = f(z)$ but $f$ need not even be rational.
\end{example}

The following results are corollaries of Proposition 4.1.

\begin{theorem} Let $f: {\mathbb B}^n \to {\mathbb B}^N$ be holomorphic with $f(0)=0$.
If the Lie algebra $\mathfrak{g}$ of $\Gamma_f \cap {\bf U}(n)$
contains a matrix $M$ such that $L=-iM$ has 
$k$ positive eigenvalues, then there is a $k$-dimensional linear subspace $V$
such that the restriction of $f$ to $V$ is a polynomial.  \end{theorem}

\begin{proof} Write $M= iPDP^*$ where $P$ is unitary and $D$ is diagonal with $k$ positive eigenvalues.
Let $W$ be the linear subspace of $\mathfrak{g}$ spanned over ${\mathbb R}$ by $M$. Since $W$ is a Lie subalgebra,
there is a unique Lie subgroup $G \subseteq \Gamma_f \cap {\bf U}(n)$ whose Lie algebra is $W$.
We regard $G$ as the group of transformations of the form $Pe^{iDt}P^*$ for $t \in {\mathbb R}$. Put $h= f \circ P$. 
By Proposition 2.2, the curve $\gamma = e^{iDt}$ lies in $\Gamma_h$. We wish to apply 
Proposition 4.1. The condition on $\gamma'(0)$ holds by construction. Since $h(0)=0$,
Lemma 2.1 implies that $||h \circ \gamma||^2 = ||h||^2$. Thus both conditions
in Proposition 4.1 apply, and we conclude that the restriction of $f$ to some $k$-dimensional subspace
is a polynomial.
\end{proof}

The corollaries from the introduction follow easily. First,
if $\Gamma_f$ contains the center of ${\bf U}(n)$,
then the Lie algebra $\mathfrak{g}$ of $\Gamma_f \cap {\bf U}(n)$ contains $i{\bf I}_n$.
The subspace $W$ in Theorem 4.1 is ${\mathbb C}^n$ and hence $f$ is equivalent to a polynomial.
The statements in Corollary 1.2 follow as well. In each case the hypotheses
imply the hypotheses of Theorem 4.1 with $k=n$.  Hence $f$
is rational. The results from [DX] then yield the conclusions
about spherical equivalence.

Huang (See [Hu1]) established a linearity result for proper maps in low codimension
without assuming rationality. Instead, the maps are assumed to have two continuous
derivatives at the boundary sphere. By contrast, we make no regularity assumption;
instead we assume that the Hermitian invariant group is large.

We note also the following result from [DX]. Each finite subgroup of ${\rm Aut}({\mathbb B}^n)$
is the Hermitian group of some rational proper map with source ${\mathbb B}^n$.

\section{Completely non-rational proper maps}

A proper holomorphic map $f:{\mathbb B}^n \to {\mathbb B}^N$ is {\bf completely non-rational}
if, for each positive-dimensional affine linear subspace $V$ of ${\mathbb C}^n$, the restriction of $f$ to $V$
is not rational.

\begin{example} Completely non-rational proper maps between balls exist.
Begin with an arbitrary bounded holomorphic map $g$ that is completely non-rational but continuous
on the closed ball. After dividing by a constant, we may assume $1- ||g||^2 > 0$ on the unit sphere. 
By the work of L\o w [Lw], there is a holomorphic
map $h$ such that $1-||g||^2 = ||h||^2$ on the sphere.
Hence $g \oplus h$ maps the sphere to the sphere, and hence defines a proper
map between balls. It is completely non-rational because $g$ is.
One subtlety here involves the regularity of $h$. In general one cannot
conclude that $h$ is any smoother than continuous. \end{example}

We prove the following result.

\begin{theorem} Let $f:{\mathbb B}^n \to {\mathbb B}^N$ be a  minimal completely non-rational proper mapping.
Then $\Gamma_f$ and $T_f$ are either both finite or both noncompact. \end{theorem}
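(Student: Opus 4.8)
The plan is to reduce everything to a single key claim: for a minimal completely non-rational proper map $f$, the group $\Gamma_f$ is either finite or noncompact. Granting this, the theorem drops out immediately. Since $f$ is minimal, Theorem 3.1 supplies a homomorphism $\Phi:\Gamma_f \to T_f$ which is surjective by construction, so $T_f = \Phi(\Gamma_f)$. If $\Gamma_f$ is noncompact, then $T_f$ is noncompact by the third bullet of Proposition 2.3, and both groups are noncompact. If $\Gamma_f$ is finite, then $T_f$, being the image of a finite group under $\Phi$, is finite as well, and both groups are finite. The only remaining possibility, a compact but infinite $\Gamma_f$, is exactly what the key claim rules out.

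Before proving the claim I would record an invariance lemma: complete non-rationality is preserved under spherical equivalence. For a target automorphism $\psi$ this is immediate, since $(\psi\circ f)|_V = \psi \circ (f|_V)$ and $\psi$ is linear fractional, so $(\psi\circ f)|_V$ is rational if and only if $f|_V$ is. For a source automorphism $\chi$ the essential geometric point is that an automorphism of ${\mathbb B}^n$ is the restriction of a projective linear map, and hence carries the trace on ${\mathbb B}^n$ of each complex affine subspace $V$ to the trace of another complex affine subspace $V'$ of the same dimension, with $\chi|_V:V\cap{\mathbb B}^n \to V'\cap{\mathbb B}^n$ a linear fractional biholomorphism. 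Then $(f\circ\chi)|_V = (f|_{V'})\circ(\chi|_V)$, so if the left side were rational then $f|_{V'}=(f\circ\chi)|_V\circ(\chi|_V)^{-1}$ would be rational, contradicting complete non-rationality of $f$. Thus $f\circ\chi$ inherits complete non-rationality. I expect this lemma to be the main obstacle, precisely because one must carefully verify that ball automorphisms respect the family of affine slices of the ball (the projective-linear description of ${\rm Aut}({\mathbb B}^n)$ is the clean way to see it).

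To prove the claim, suppose $\Gamma_f$ is compact. By Proposition 2.3 it is a compact Lie subgroup of ${\rm Aut}({\mathbb B}^n)$, hence, as in the argument of Proposition 3.2 (using [HT]), contained in a conjugate of ${\bf U}(n)$. Choose $\chi$ so that, after replacing $f$ by $f\circ\chi$, we have $\Gamma_f \subseteq {\bf U}(n)$; by Proposition 2.2 the new first projection is $\chi^{-1}\Gamma_f\chi$, and by the invariance lemma $f\circ\chi$ is again minimal, completely non-rational, and proper. Composing further with a target automorphism, we may also arrange $f(0)=0$ without changing $\Gamma_f$. Now suppose for contradiction that $\Gamma_f$ is infinite. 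A compact infinite Lie group has positive dimension, so the Lie algebra $\mathfrak{g}$ of $\Gamma_f = \Gamma_f\cap{\bf U}(n)$ is nonzero. Pick $0\neq M\in\mathfrak{g}$; then $L=-iM$ is a nonzero Hermitian matrix, and after replacing $M$ by $-M$ if necessary we may assume $L$ has at least one positive eigenvalue. Theorem 4.1 then produces a positive-dimensional linear subspace $V$ on which $f$ restricts to a polynomial, hence to a rational map, contradicting complete non-rationality. Therefore $\Gamma_f$ is finite, which establishes the claim and completes the proof.
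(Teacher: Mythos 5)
Your proof is correct and follows essentially the same route as the paper: use Theorem 4.1 together with complete non-rationality to rule out a positive-dimensional Lie algebra for the unitary part of $\Gamma_f$, then use the fact that a compact Lie subgroup of ${\rm Aut}({\mathbb B}^n)$ is conjugate into ${\bf U}(n)$ to conclude that compact forces finite. You are in fact more careful than the paper on two points it leaves implicit: the invariance of complete non-rationality under precomposition with a ball automorphism (needed to run the argument after conjugating $\Gamma_f$ into ${\bf U}(n)$), and the use of the surjective homomorphism $\Phi$ from Theorem 3.1 to transfer finiteness from $\Gamma_f$ to $T_f$, since Proposition 2.3 by itself only gives the equivalence of noncompactness.
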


\begin{proof} By Proposition 2.3, it suffices to prove the conclusion for $\Gamma_f$.
Assume $f(0)=0$. Let $\mathfrak{g}$ be the Lie algebra of $\Gamma_f \cap {\bf U}(n)$.
If $\Gamma_f \cap {\bf U}(n)$ is not finite, then its dimension as a Lie group is at least $1$.
Then $\mathfrak{g}$ contains a non-zero skew-Hermitian matrix $M$. Put $L=-iM$.
After possibly replacing $M$ with $-M$ we may assume that $L$ has a positive eigenvalue.
By Theorem 4.1, $f$ is rational on some linear subspace of dimension $1$, contradicting
the assumption that $f$ is completely non-rational. Hence  $\Gamma_f \cap {\bf U}(n)$ is finite.

Next let $\gamma$ be an arbitrary 
element of ${\rm Aut}({\mathbb B}^n)$. Put $G = f \circ \gamma$. Proposition 2.2 implies
that $\Gamma_g = \gamma^{-1} \circ f \circ \gamma$. Thus both $\Gamma_g \cap {\bf U}(n)$ and $\Gamma_f \cap \gamma \circ {\bf U}(n) \circ \gamma^{-1}$ 
are finite. By Proposition 2.3, $\Gamma_f$ is a Lie subgroup. Hence it is either noncompact or 
contained in a maximal compact Lie subgroup of ${\rm Aut}({\mathbb B}^n)$. In the second case, 
for some automorphism $\phi$, we have
$$ \Gamma_f \subseteq \phi \circ {\bf U}(n) \circ \phi^{-1}. \eqno (7) $$
By the previous argument, the intersection of the group on the right-hand side of (7) with $\Gamma_f$
is $\Gamma_f$ and must be finite.
\end{proof}

We close the paper by mentioning a rigidity problem for holomorphic mappings between 
compact hyperbolic spaces raised by Siu in [S]. See also [Hu2] for a CR-geometric formulation.
Cao and Mok ([CM]) established the following result: Let $(X,g)$ be a compact $n$-dimensional complex hyperbolic space form 
and $(Y,h)$ an $m$-dimensional hyperbolic space form with $m \le 2n-1$. Then a holomorphic immersion $f:X \to Y$
is necessarily a totally geodesic isometric immersion. The conclusion is not known for larger target dimensions.
The hyperbolic space form $X$ is the quotient of the unit ball by a lattice.

Siu's question has the following formulation in the language of this paper. Let $f:{\mathbb B}^n \to {\mathbb B}^N$
be a proper holomorphic map, and suppose $\Gamma_f$ contains a (co-compact) lattice. Must $f$ be a totally geodesic embedding
in the Poincar\'{e} metric? The third part of Corollary 1.2 from the
introduction draws this conclusion when the group $\Gamma_f$ is non-compact and contains the center of ${\bf U}(n)$.

\section{bibliography}

\medskip

[BM] S. Bochner and W. Martin, Several Complex Variables. Princeton Mathematical Series, Vol. 10, Princeton University Press, Princeton, N. J., 1948. 

\medskip

[CM] H. Cao and N. Mok, Holomorphic immersions between compact hyperbolic space
forms, {\it Invent. Math.} 100 (1990) no. 1, 49-62.

\medskip

[D] J. P. D'Angelo,  Several Complex Variables and the Geometry of Real Hypersurfaces,
CRC Press, Boca Raton, Fla., 1992.

\medskip

[DX] J. P. D'Angelo and M. Xiao, Symmetries in CR complexity theory, {\it Adv. Math.} 313(2017), 590-627.

\medskip 

[F1] F. Forstneri\v c,
Extending proper holomorphic maps of positive codimension,
{\it Inventiones Math.}, 95(1989), 31-62.

\medskip

[F2] F. Forstneri\v c, Proper holomorphic maps from balls,
{\it Duke Math. J.} 53 (1986), no. 2, 427-441. 

\medskip

[G] D. Grundmeier, Signature pairs for group-invariant Hermitian polynomials,
{\it Internat. J. Math.} 22 (2011), no. 3, 311-343. 

\medskip

[HT] K. Hoffman and C. Terp, Compact subgroups of Lie groups and locally compact
groups, {\it Proc. A. M. S.}, Vol. 120, No. 2 (1994), 623-634.

\medskip

[HJY] X. Huang, S. Ji, and W. Yin, Recent progress on two
problems in several complex variables,  {\it Proceedings of the ICCM 2007},
International Press, Vol I, 563-575.

\medskip

[Hu1] X. Huang,  On a linearity problem for proper maps between balls in
complex spaces of different dimensions, {\it J. Diff. Geom.} 51 (1999), 
no 1, 13-33.

\medskip

[Hu2] X. Huang, On some problems in several complex variables and CR
geometry, {\it Proceedings of the ICCM}, S. T. Yau, Ed., AMS/IP Studies in Advanced
Mathematics 20 (2001), 383-396.

\medskip

[Li1] D. Lichtblau, Invariant proper holomorphic maps between balls, Thesis,
University of Illinois at Urbana-Champaign, 1991.

(http://www.ideals.illinois.
edu/handle/2142/23603)

\medskip

[Li2] D. Lichtblau, Invariant proper holomorphic maps between balls, {\it Indiana Univ.
Math. J.} 41 (1992), no. 1, 213-231.

\medskip

[Lw] E. L\o w, Embeddings and proper holomorphic maps of strictly pseudoconvex domains into polydiscs and balls,
{\it Math. Z.} 190 (1985), no. 3, 401-410. 

\medskip

[KM]  V. Koziarz and N. Mok, Nonexistence of holomorphic submersion between complex
unit balls equivariant with respect to a lattice and their generalization, {\it Amer. J.
Math.} 132 (2010)  no. 5, 1347-1363.

\medskip

[S] Y.-T. Siu, Some recent results in complex manifold theory related to vanishing
theorems for the semipositive case, Arbeitstagung Bonn 1984, Lect. Notes Math. Vol.
1111, Springer-Verlag 1985, Berlin-Heidelberg-New York, pp. 169-192.

\end{document}